\theoremstyle{definition}
\theoremstyle{remark}
\newtheorem{theorem}{Theorem}
\newtheorem{lemma}{Lemma}[section]
\newtheorem{proposition}[lemma]{Proposition}
\newtheorem{corollary}[lemma]{Corollary}
\newtheorem{definition}[lemma]{Definition}
\newtheorem{remark}{Remark}[section]
\newtheorem{example}[remark]{Example}
\newtheorem{notation}[remark]{Notation}
\newtheorem*{theorem1}{Theorem A}
\numberwithin{equation}{section}
\DeclareMathOperator{\lm}{lm}
\DeclareMathOperator{\lc}{lc}
\DeclareMathOperator{\lcm}{lcm}
\DeclareMathOperator{\lt}{lt}
\DeclareMathOperator{\homology}{H}
\begin{document}

\title{Gr\"{o}bner bases and equations of the multi-Rees algebras}

\author{Babak Jabarnejad}
\address{Department of Mathematical sciences, University of Arkansas, Fayetteville, Arkansas, 72701, USA}
\email{babak.jab@gmail.com}

\subjclass[2010]{Primary 13A30,13P10,13H10,13B25}

\date{}
\keywords{Gr\"{o}bner bases, multi-Rees algebra, Cohen-Macaulay}

\dedicatory{}

\begin{abstract}
In this paper we describe the equations defining the multi-Rees algebra $R[I_1^{a_1}t_1,\dots,I_r^{a_r}t_r]$, where $R$ is a Noetherian ring and the ideals are generated by subsets of a fixed weak regular sequence.
\end{abstract}

\maketitle

\section{Introduction}

Let $R$ be a Noetherian ring, and let $s_1,\dots,s_n$ be generators of the ideal $I$. We define the homomorphism $\phi$ from the
polynomial ring $S=R[T_1,\dots,T_n]$ to the Rees algebra $R[It]$ by sending $T_i$ to $s_it$. Then $R[It]\cong S/\ker(\phi)$. The generating set of $\ker(\phi)$ is referred to as the
defining equations of the Rees algebra $R[It]$. Finding these generating sets is a tough problem which is open for most classes of ideals. Some papers about this problem are \cite{vasconcelos1991rees}, \cite{johnson2006equations}, \cite{vasconcelosulrich1993rees}, \cite{morey1996rees}, \cite{moreyulrich1996rees}, \cite{kustinpoliniulrich2017blowup}.

More generally, given any ideals $I_1,\dots,I_r$ in a ring $R$, one would like to describe the
equations of the multi-Rees algebra $R[I_1t_1,I_2t_2,\dots,I_rt_r]$. Indeed, the multi-Rees algebra in question is simply the Rees algebra of the module $I_1\oplus I_2\oplus\dots\oplus I_r$. However, in our work, we make no serious use of this theory. There is little work on the defining equations of the multi-Rees algebra compared to the ordinary Rees algebra. Another motivation for investigating the multi-Rees algebra is an illustration of the theory of Rees algebra of modules \cite{eisenbud2003rees}, \cite{simis2003rees}. An important paper of Ribbe~\cite{ribbe1999defining}, describes the equations of the multi-Rees algebra $R[It_1,\dots,It_r]$, when $I$ is an ideal of linear type. In \cite{jabarnejad2016rees}, we determine the equations of the multi-Rees algebra $R[I^{a_1}t_1,\dots,I^{a_r}t_r]$ for any ideal $I$ of linear type.

The concept of Gr\"{o}bner bases for polynomial rings over a field was presented by Buchberger~\cite{buchberger1965algorithmus}. He also gave generalizations of this concept over some rings (e.g.~\cite{buchberger1984critical}). One may see a generalization of this concept over arbitrary rings in \cite{adams1994introduction}.

In this paper we determine the equations of the multi-Rees algebra $R[I_1^{a_1}t_1,I_2^{a_2}t_2,\dots,I_r^{a_r}t_r]$, where $R$ is any Noetherian ring and ideals $I_i$ are generated by subsets of a fixed permutable weak regular sequence. In general, these equations can have arbitrarily large degrees. First, we give a slight generalization of Buchberger's Criterion, then applying this generalized notion we prove the main result for $R[I_1t_1,I_2t_2,\dots,I_rt_r]$. Next, we use the method given in~\cite{jabarnejad2016rees} to prove the main result for $R[I_1^{a_1}t_1,I_2^{a_2}t_2,\dots,I_r^{a_r}t_r]$.

To describe the equations, we introduce the notion of a quasi-matrix and that of a binary quasi-minor, which serves as a generalization $2\times 2$-minors. 

\begin{theorem1}
Let R be a Noetherian ring and suppose that ideals $I_i$ are generated by subsets of a fixed permutable weak regular sequence $s_1,\dots,s_n$. Then there is a quasi-matrix $D$, whose entries are certain indeterminates, such that the multi-Rees algebra $R[I_1^{a_1}t_1,\dots,I_r^{a_r}t_r]$ is defined by the ideal generated by all binary quasi-minors of $[\underline{s}|D]$.
\end{theorem1}

For example in the ring $R=\mathbb{Z}[x,y,z]$, we consider permutable weak regular sequence $2,3,5,x,y,z$. The ideals $I_1=\langle 2,x,y\rangle$, $I_2=\langle 3,x,z\rangle$ and $I_3=\langle 5,y,z\rangle$ are not equal to $R$ and describing the equations of the multi-Rees algebra $R[I_1t_1,I_2t_2,I_3t_3]$ is an interesting question. For another example, 
if $R$ is a Noetherian ring and $a,b,c$ are nonzero divisors, then on the ring $R\times R\times R[x,y,z]$ we have permutable weak regular sequence $(a,1,1),(1,b,1),(1,1,c),x,y,z$. Another example could be a regular sequence of homogeneous elements with positive degree in the Noetherian ring $R[x_1,\dots,x_n]$.

\section{Gr\"{o}bner bases on $\underline{s}$-monomial type polynomials}

Let $R$ be a commutative ring with unit. We fix $S=R[x_1,...,x_n]$. Assume we have a monomial order $>$ on $S$. If $f\in S$, then we denote the leading term of $f$ by $\lt(f)$, leading coefficient by $\lc(f)$ and leading monomial by $\lm(f)$.

\begin{definition}
Let $G=\{f_1,...,f_m\}\subseteq S$. We say $f\in S$ reduces to zero modulo $G$ and denote this by $f \rightarrow_{G} 0$ if there are $p_i\in S$, for $1\le i \le m \ $, such that
$$
f=p_1f_1+...+p_mf_m,\ \text{and} \ \lm(f) \ge \lm(p_i)\lm(f_i)  
$$
\end{definition}

\begin{definition}
Let $I$ be an ideal of $S$ and $\{f_1,...,f_m\}\subseteq I$. We say $\{f_1,...,f_m\}$ is a Gr\"{o}bner basis for $I$ if $\langle \lt(f_1),...,\lt(f_m)\rangle=\lt(I)$, where $\lt(I)$ is the ideal generated by leading terms of elements of $I$.
\end{definition}

If $R$ is a Noetherian ring, then the Gr\"{o}bner basis of $I$ always exists.

As we know the sequence $\underline{s}=s_1,\dots,s_k$ is a weak regular sequence if $s_i$ is an $R/\langle s_1,\dots,s_{i-1}\rangle$-regular element for $i=1,\dots,k$. We fix a permutable weak regular sequence $\underline{s}=s_1,\dots,s_k$ in $R$, where none of them is a unit. By an $\underline{s}$-monomial we mean a monomial in this fixed weak regular sequence and by an $\underline{s}$-term we mean an element of the form $ua$, where $u$ is a unit and $a$ is an $\underline{s}$-monomial. 

\begin{remark}
Any $\underline{s}$-monomial has a unique representation. Also, we consider 1 as an $\underline{s}$-monomial even
though 1 is also a monomial in $S$.
\end{remark}

\begin{remark}
If $a,b\in R$ and $a,b$ are $\underline{s}$-terms, then $\lcm(a,b)$ and $\gcd(a,b)$ exist and they are unique up to unit (by $\lcm$ (resp. $\gcd$) we mean the least common multiple (resp. the greatest common divisor)). If $a=u_1s_1^{\alpha_1}\dots s_k^{\alpha_k},\ b=u_2s_1^{\beta_1}\dots s_k^{\beta_k}$, $\alpha_i,\beta_i\in\mathbb{Z}_{\ge 0}$ are $\underline{s}$-terms, then we canonically choose $\gcd(a,b)=s_1^{\gamma_1}\dots s_k^{\gamma_k}$ and $\lcm(a,b)=s_1^{\eta_1}\dots s_k^{\eta_k}$, where $\gamma_i=\min\{\alpha_i,\beta_i\}$ and $\eta_i=\max\{\alpha_i,\beta_i\}$.
\end{remark}

The following lemma is similar to \cite[Lemma 3]{kaplansky1962r}.

\begin{lemma}\label{kaplansky}
Let $I$ be an ideal finitely generated by $\underline{s}$-monomials in $s_2,\dots,s_k$. Then $ts_1\in I$ implies $t\in I$.
\end{lemma}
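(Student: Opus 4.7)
The plan is to prove the lemma by strong induction on $r+N$, where $r$ is the number of generators and $N=\sum_{i=1}^{r}\deg(m_i)$ is their total $\underline{s}$-degree. The base cases $r+N\le 1$ are immediate: either $I=0$, and permutability makes $s_1$ a non-zerodivisor, forcing $t=0$; or every generator is $1$ and $I=R$. In the inductive step I may assume every $m_i$ has positive degree, and I fix a variable $s_j$, $j\ge 2$, dividing some generator, say $s_j\mid m_1$.

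If $s_j$ divides every generator, I write $m_i=s_j m_i'$, so that $I=s_j\langle m_1',\dots,m_r'\rangle$ and $ts_1\in\langle s_j\rangle$. Because $s_j,s_1$ is a weak regular pair (a permutation of part of $\underline{s}$), $s_1$ is a non-zerodivisor modulo $\langle s_j\rangle$, giving $t=s_j t'$; cancelling the non-zerodivisor $s_j$ produces $t's_1\in\langle m_1',\dots,m_r'\rangle$, an instance of strictly smaller total degree $N-r$, and the induction hypothesis closes the case.

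Otherwise some $m_i$ is not divisible by $s_j$. Moving $s_j$ to the front of $\underline{s}$ by permutability, the quotient $R/\langle s_j\rangle$ inherits the permutable weak regular sequence $\bar s_1,\dots,\widehat{\bar s_j},\dots,\bar s_k$, and $\bar I$ is generated by $\{\bar m_i : s_j\nmid m_i\}$---strictly fewer generators of strictly smaller total degree. Applying the induction hypothesis in the quotient, with $\bar s_1$ in the role of $s_1$, yields $\bar t\in\bar I$, so I write $t=u+vs_j$ with $u\in I$. The assumption $ts_1\in I$ then becomes $vs_j s_1\in I$, and by permutability this says $vs_1\in I:s_j$.

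The main obstacle is upgrading $vs_1\in I:s_j$ to $v\in I:s_j$, and for this I need to describe $I:s_j$. My plan is to establish $I:s_j=\langle m_i : s_j\nmid m_i\rangle+\langle m_i/s_j : s_j\mid m_i\rangle$; one inclusion is obvious, and for the other I split a witness $xs_j=\sum a_i m_i$ into $s_j$-free and $s_j$-divisible parts, isolating an element $y$ with $ys_j\in\langle m_i : s_j\nmid m_i\rangle$, an ideal generated by $\underline{s}$-monomials not involving $s_j$. Permutability justifies an application of the lemma with $s_j$ in the role of $s_1$ to this smaller instance (its $r+N$ has strictly decreased), placing $y$ in the claimed ideal. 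With $I:s_j$ now identified as an ideal of $\underline{s}$-monomials in $s_2,\dots,s_k$ of total degree at most $N-1$, a final application of the induction hypothesis gives $v\in(I:s_j):s_1=I:s_j$, and therefore $t=u+vs_j\in I$.
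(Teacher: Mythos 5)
The paper does not actually prove this lemma: it only records that the statement is analogous to Kaplansky's Lemma~3, so there is no in-text argument to compare against (the nearest model in the paper is the proof of Lemma~\ref{pre-syzygy}, a single induction on degree that peels off one factor of $s_1$ at a time). Your proof is correct and self-contained, and it takes a different, more structural route: a strong induction on $r+N$, split according to whether the chosen $s_j$ divides every generator, with the recursive calls --- to $\langle m_1',\dots,m_r'\rangle$ after cancelling $s_j$, to $\bar I$ in $R/\langle s_j\rangle$, and twice to the colon ideal $I:s_j$ --- each strictly decreasing the measure, as one checks (in the second case $s_j\mid m_1$ guarantees that both the number of generators and the total degree drop, both in the quotient step and in the identification of $I:s_j$). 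What this buys is that every transfer of hypotheses is verified rather than cited: that permutations and initial segments of $\underline{s}$ remain weak regular sequences modulo $\langle s_j\rangle$, that $s_j$ itself is a non-zerodivisor on $R$ (needed for your cancellation in the first case), and the explicit generating set of $I:s_j$, which is a monomial-colon statement of independent use elsewhere in arguments of this kind. Two trivial wording points, neither affecting correctness: the step ``by permutability this says $vs_1\in I:s_j$'' is just the definition of the colon ideal, no permutability needed; and you should say explicitly that a degree-zero generator forces $I=R$, which is why you may assume every $m_i$ has positive degree in the inductive step.
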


The following lemma is similar to  \cite[Lemma 17]{taylor1966ideals} with a similar proof. Since Taylor's thesis cannot be found easily we provide the proof. 

\begin{lemma}\label{pre-syzygy}
Let $f,f_1,\dots,f_m\in R$ be $\underline{s}$-monomials. If $af=a_1f_1+\dots+a_mf_m$, then $a\in\langle g_1,\dots,g_m\rangle$, where for every $1\le i\le m$, $g_i=\frac{\lcm(f_i,f)}{f}=\frac{f_i}{\gcd(f_i,f)}$.
\end{lemma}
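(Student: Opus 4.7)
The plan is to induct on the total degree of $f$, where by total degree of an $\underline{s}$-monomial $s_1^{\alpha_1}\cdots s_k^{\alpha_k}$ I mean $\alpha_1 + \cdots + \alpha_k$. The base case $\deg(f) = 0$ (so $f = 1$) is immediate: here $\gcd(f_i, f) = 1$ gives $g_i = f_i$, and the hypothesis $a = \sum_i a_i f_i$ already exhibits $a$ as an element of $\langle g_1,\dots,g_m\rangle$.

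For the inductive step I would pick some $s_j$ dividing $f$ and write $f = s_j f'$, so that $\deg(f') < \deg(f)$. Partition the indices as $I = \{i : s_j \mid f_i\}$ and $J = \{i : s_j \nmid f_i\}$, and for $i \in I$ write $f_i = s_j \hat f_i$. The hypothesis rearranges to
$$
s_j\Bigl(af' - \sum_{i \in I} a_i \hat f_i\Bigr) = \sum_{i \in J} a_i f_i,
$$
and the right-hand side lies in the ideal $J_{\ast} := \langle f_i : i \in J\rangle$, which is generated by $\underline{s}$-monomials not involving $s_j$. Permuting $\underline{s}$ to move $s_j$ to the front of the sequence (this is where the permutability hypothesis is essential), Lemma \ref{kaplansky} applied to $J_{\ast}$ yields $(J_{\ast} : s_j) = J_{\ast}$, so one may write $af' - \sum_{i \in I} a_i \hat f_i = \sum_{i \in J} a'_i f_i$ for suitable $a'_i \in R$. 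Substituting back produces a new identity $af' = \sum_i b_i F_i$, where $F_i = \hat f_i$ for $i \in I$ and $F_i = f_i$ for $i \in J$.

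Applying the inductive hypothesis to this smaller identity gives $a \in \langle F_i/\gcd(F_i,f') : 1 \le i \le m\rangle$, and it remains to verify $F_i/\gcd(F_i, f') = g_i$ for every $i$. When $i \in J$ this is immediate because $f_i$ is not divisible by $s_j$, so $\gcd(f_i, f') = \gcd(f_i, f)$. When $i \in I$, both $f_i$ and $f$ are divisible by $s_j$, and a direct exponent comparison gives $\gcd(f_i/s_j, f/s_j) = \gcd(f_i, f)/s_j$, whence $F_i/\gcd(F_i, f') = f_i/\gcd(f_i, f) = g_i$. The main technical obstacle is the intermediate colon-ideal step: passing from the divisibility $s_j X \in J_{\ast}$ to an explicit expression $X = \sum a'_i f_i$ requires exactly Lemma \ref{kaplansky}, invoked with $s_j$ in place of $s_1$, and this is the one place where the full strength of permutability is used. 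Once that step is available, the rest of the argument is routine bookkeeping with exponents.
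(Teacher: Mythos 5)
Your proof is correct and follows essentially the same route as the paper: induct on the total degree of $f$, strip off one factor of a variable dividing $f$, split the $f_i$ according to whether that variable divides them, invoke Lemma~\ref{kaplansky} (via permutability) to rewrite the part not involving that variable, and apply the inductive hypothesis to the lower-degree identity. Your version differs only cosmetically (base case at degree $0$ rather than $1$, and an explicit check that the $\gcd$'s match up, which the paper leaves implicit).
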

\begin{proof}
We prove the claim by induction on the total degree $d$ of $f$. If $d=1$ it is clear by Lemma~\ref{kaplansky}. We assume that claim is true for all monomials of total degree less than $d$. Without loss of generality we may assume that the exponent of $s_1$ in $f$ is nonzero. Let $f_1,\dots,f_t$, $t\le m$ be precisely those $f_i$ for which the exponent of $s_1$ is zero. Write $h_i=f_i$ for $1\le i\le t$ and for $t+1\le i\le m$, let $h_i$ be the $\underline{s}$-monomial obtained from $f_i$ by reducing the exponent of $s_1$ by 1. Also, let $h$ be
obtained from $f$ by reducing the exponent of $s_1$ by 1. We have $(ah-\sum_{i=t+1}^{m}a_ih_i)s_1=\sum_{i=1}^{t}a_ih_i$. Since $s_1$ does not occur in $h_1,\dots,h_t$, by the case $d=1$, we have $ah-\sum_{i=t+1}^{m}a_ih_i=\sum_{i=1}^{t}b_ih_i$. Now the total degree of $h$ is $d-1$ and hence by induction the claim proceeds. 
\end{proof}

At this stage we recall the concept of the Taylor complex of ideals generated by $\underline{s}$-monomials. 

Given an ideal $I=\langle a_1,\dots,a_n\rangle\subseteq R$, where the  $a_i$'s are $\underline{s}$-monomials, the Taylor complex $\mathbb{T}$ associated to these generators is a complex of $R$-modules as follows:

$$
\mathbb{T}: 0\rightarrow T_n\rightarrow T_{n-1}\rightarrow\dots\rightarrow T_2\rightarrow T_1\rightarrow T_0\rightarrow R/I\rightarrow 0.
$$

$T_0=R$, for $1\le p\le n$, $T_p$ is a free $R$-module of rank $\binom{n}{p}$ with basis $\{e_{i_1,\dots,i_p};\ 1\le i_1<i_2<\dots <i_p\le n\}$. The differential $d: T_p\rightarrow T_{p-1}$ is defined by
$$
d(e_{i_1,\dots,i_p})=\sum_{r=1}^{p}(-1)^{r-1}\frac{u(i_1,\dots,i_p)}{u(i_1,\dots,\hat{i}_r,\dots,i_p)}e_{i_1,\dots,\hat{i}_r,\dots,i_p},
$$ 
where $u(i_1,\dots,i_k)=\lcm(a_{i_1},\dots,a_{i_k})$ (for $p=1$, $d(e_i)=a_i$). We see easily $d\circ d=0$ and $\homology_{0}(\mathbb{T})=R/I$. Now Applying Lemma~\ref{pre-syzygy}, we have a similar lemma to \cite[Theorem 7.1.1]{herzog2011monomial} with similar proof for our case.

\begin{lemma}
Let $I=\langle a_1,\dots,a_n\rangle\subseteq R$ be an ideal and the generators be $\underline{s}$-monomials. Let $\mathbb{T}$ be the Taylor complex for this sequence. Then $\mathbb{T}$ is acyclic, and hence a graded free $R$-resolution of $R/I$. 
\end{lemma}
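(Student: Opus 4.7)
The plan is to adapt the proof of \cite[Theorem 7.1.1]{herzog2011monomial} to the present setting by inducting on the number of generators $n$. The base case $n=1$ reduces to the two-term complex $0 \to R \xrightarrow{a_1} R \to R/\langle a_1\rangle \to 0$, which will be acyclic because $a_1$ is a product of elements of the permutable weak regular sequence $\underline{s}$ and hence a non-zerodivisor (each $s_i$ is a non-zerodivisor by permutability, and products of non-zerodivisors are again non-zerodivisors).

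For the inductive step I would set $I' = \langle a_1,\dots,a_{n-1}\rangle$ and focus on the colon $J = I' : a_n$. The crucial preliminary is to identify $J$ with $\langle a_i/\gcd(a_i,a_n) : 1 \le i \le n-1\rangle$, an ideal again generated by $\underline{s}$-monomials. The inclusion $\supseteq$ is immediate from $a_n \cdot a_i/\gcd(a_i,a_n) = \lcm(a_i,a_n) \in \langle a_i\rangle$, while the inclusion $\subseteq$ is precisely the content of Lemma~\ref{pre-syzygy} applied with $f = a_n$ and $f_i = a_i$. I expect this step to be the main obstacle: an element of $I' : a_n$ need not itself be an $\underline{s}$-monomial, so some genuine input is required to pin down the generators of $J$, and Lemma~\ref{pre-syzygy} was designed precisely to supply it.

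With the claim in hand, the next step is to exhibit $\mathbb{T}$ as the mapping cone of a chain map $\psi \colon \mathbb{T}'' \to \mathbb{T}'$, where $\mathbb{T}'$ and $\mathbb{T}''$ are the Taylor complexes attached to the chosen generators of $I'$ and of $J$, respectively. The basis of $T_p$ splits into the $p$-subsets of $\{1,\dots,n\}$ not containing $n$ (a basis of $T'_p$) and those containing $n$ (in bijection, via the identity $\lcm\bigl(a_{i_1}/\gcd(a_{i_1},a_n),\dots,a_{i_p}/\gcd(a_{i_p},a_n)\bigr) = \lcm(a_{i_1},\dots,a_{i_p},a_n)/a_n$, with a basis of $T''_{p-1}$); a direct computation will show that the Taylor differential assembles into the mapping-cone differential, with the degree-zero component of $\psi$ being multiplication by $a_n \colon R/J \to R/I'$. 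The induction hypothesis then makes both $\mathbb{T}'$ and $\mathbb{T}''$ resolutions, and since $\cdot a_n \colon R/J \to R/I'$ is injective by the very definition of a colon, the long exact sequence of the mapping cone forces $H_p(\mathbb{T}) = 0$ for $p \ge 1$ and yields $H_0(\mathbb{T}) = R/I$. Degenerate situations in which some $a_i/\gcd(a_i,a_n)$ is a unit merely force $R/J = 0$, which is harmlessly resolved by $\mathbb{T}''$ and leaves the argument intact.
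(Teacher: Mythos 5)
Your proposal is correct and is essentially the proof the paper intends: the paper only points to \cite[Theorem 7.1.1]{herzog2011monomial} ``with similar proof'' via Lemma~\ref{pre-syzygy}, and that proof is exactly the mapping-cone induction you describe, with Lemma~\ref{pre-syzygy} supplying the key computation $I':a_n=\langle a_i/\gcd(a_i,a_n)\rangle$ and permutability guaranteeing that $\underline{s}$-monomials are non-zerodivisors for the base case and the injectivity of multiplication by $a_n$ on $R/(I':a_n)$. No gaps.
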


\begin{corollary}\label{syzygy}
Let $a_1,\dots,a_n\in R$ be $\underline{s}$-monomials. Then the  $R$-module $\{(c_1,\dots,c_n);\ c_1a_1+\dots+c_na_n=0\}\subseteq R^n$ is generated by $\{\frac{\lcm(a_i,a_j)}{a_i}e_i-\frac{\lcm(a_i,a_j)}{a_j}e_j;\ 1\le i < j\le n\}$.
\end{corollary}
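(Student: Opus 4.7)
The plan is to read the corollary off directly from the acyclicity of the Taylor complex $\mathbb{T}$ established in the preceding lemma. The module in question is, by definition, the kernel of the map $d_1 \colon T_1 \to T_0 = R$ given by $d_1(e_i) = a_i$, since a tuple $(c_1,\dots,c_n)$ satisfies $\sum c_i a_i = 0$ precisely when $\sum c_i e_i \in \ker(d_1)$.

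By the previous lemma, $\mathbb{T}$ is acyclic, so in particular it is exact at $T_1$; that is, $\ker(d_1) = \operatorname{image}(d_2)$. Since $T_2$ is free on the basis $\{e_{ij} : 1 \le i < j \le n\}$, the image of $d_2$ is generated as an $R$-module by the elements $d_2(e_{ij})$. Using the definition of $d$ with $p = 2$, one computes
\[
d_2(e_{ij}) \;=\; \frac{u(i,j)}{u(j)}e_j \;-\; \frac{u(i,j)}{u(i)}e_i \;=\; \frac{\lcm(a_i,a_j)}{a_j}e_j \;-\; \frac{\lcm(a_i,a_j)}{a_i}e_i,
\]
which is the negative of the generator displayed in the statement, so the two generating sets coincide.

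There is essentially no obstacle here, since all the real work has already been packaged into the acyclicity lemma (which in turn relied on Lemma~\ref{pre-syzygy}). The only thing one must check is that the differential $d_2$, specialized to the basis element $e_{ij}$, does produce exactly the Koszul-style relation $\tfrac{\lcm(a_i,a_j)}{a_i}e_i - \tfrac{\lcm(a_i,a_j)}{a_j}e_j$ up to sign; this is a one-line computation from the defining formula for the Taylor differential.
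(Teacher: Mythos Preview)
Your proof is correct and is exactly the argument the paper intends: the corollary is stated immediately after the acyclicity lemma for the Taylor complex, with no separate proof, precisely because the syzygy module is $\ker(d_1)=\operatorname{image}(d_2)$ and the images $d_2(e_{ij})$ are the displayed generators up to sign. Your computation of $d_2(e_{ij})$ from the defining formula is accurate.
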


\begin{definition}
If $f \in S$, and $\lc(f)$ is an $\underline{s}$-term, then we say $f$ is $\underline{s}$-monomial type.
\end{definition}

Let $f,g$ be $\underline{s}$-monomial type polynomials. Then we define:
$$
S(f,g)=\frac{\lcm(\lm(f),\lm(g))}{\lt(f)}f-\frac{\lcm(\lm(f),\lm(g))}{\lt(g)}g.
$$
It is clear that the $S$-ploynomial of such $f$ and $g$ is in $K[x_1,...,x_n]$, where $K$ is the total ring of fractions of $R$. Now, we define $S^{'}$-polynomial for $\underline{s}$-monomial type polynomials in $S$.

If $f,g \in S$ are $\underline{s}$-monomial type polynomials, then we define $S^{'}(f,g)$ by
$$
S^{'}(f,g)=\lcm(\lc(f),\lc(g))S(f,g).
$$ 
It is clear that $S^{'}(f,g)\in S$.

The following lemma is the key result that will be used to prove the Buchberger's Criterion for our case. But before we start we observe a notation: If $\delta=(\delta_1,\dots,\delta_n)\in \mathbb{Z}_{\ge 0}^{n}$, then by $x^{\delta}$ we mean $x_1^{\delta_1}\dots x_n^{\delta_n}$.  

\begin{lemma}\label{pre-main-lemma1}
Let $f_1,\dots,f_m\in S$ be $\underline{s}$-monomial type. Suppose we have a sum $\sum_{i=1}^{m}c_if_i$, where $c_i\in R$ and for all $i$, $\lm(f_i)=x^{\delta}, \delta\in \mathbb{Z}_{\ge 0}^{n}$. If $x^{\delta}>\lt(\sum_{i=1}^{m}c_if_i)$, then $\sum_{i=1}^{m}c_if_i$ is an $R$-linear combination of the $S^{'}$-polynomials $S^{'}(f_i,f_j)$, for $1\le i,j \le m$. We also have $x^{\delta}>\lm\left(S^{'}(f_i,f_j)\right)$.
\end{lemma}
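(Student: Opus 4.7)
The plan is to reduce the problem to a syzygy relation among pure $\underline{s}$-monomials and then apply Corollary~\ref{syzygy}. First I would unpack the $\underline{s}$-term structure of the leading coefficients: write $\lc(f_i) = u_i m_i$ with $u_i \in R$ a unit and $m_i$ an $\underline{s}$-monomial, so that $f_i = u_i m_i x^{\delta} + (\text{strictly lower terms})$. The hypothesis $x^{\delta} > \lt\bigl(\sum_i c_i f_i\bigr)$ forces the coefficient of $x^{\delta}$ on the right to vanish, yielding the syzygy
\[
\sum_{i=1}^m (c_i u_i)\, m_i = 0 \quad \text{in } R.
\]

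Next, by Corollary~\ref{syzygy} this syzygy on the $\underline{s}$-monomials $m_1,\dots,m_m$ is an $R$-linear combination of the canonical pair-syzygies
\[
\frac{M_{ij}}{m_i}\,e_i \;-\; \frac{M_{ij}}{m_j}\,e_j, \qquad M_{ij} := \lcm(m_i, m_j),
\]
so that $\sum_i c_i u_i\, e_i = \sum_{i<j} b_{ij}\bigl(\tfrac{M_{ij}}{m_i}e_i - \tfrac{M_{ij}}{m_j}e_j\bigr)$ for some $b_{ij} \in R$. To translate this back into a polynomial identity I would apply the $R$-linear map $R^m \to S$ sending $e_i \mapsto u_i^{-1} f_i$. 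Its value on the left becomes $\sum_i c_i f_i$, while its value on a single pair-syzygy is
\[
u_i^{-1}\frac{M_{ij}}{m_i}\,f_i \;-\; u_j^{-1}\frac{M_{ij}}{m_j}\,f_j \;=\; \frac{M_{ij}}{\lc(f_i)}\,f_i \;-\; \frac{M_{ij}}{\lc(f_j)}\,f_j.
\]
Using the canonical choice from the earlier remark we have $\lcm(\lc(f_i),\lc(f_j)) = M_{ij}$, and since $\lm(f_i) = \lm(f_j) = x^{\delta}$ also $\lcm(\lm(f_i),\lm(f_j)) = x^{\delta}$; unwinding the definitions of $S$ and $S'$ shows that this expression is exactly $S'(f_i, f_j)$. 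This delivers the required identity $\sum_i c_i f_i = \sum_{i<j} b_{ij}\, S'(f_i, f_j)$.

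The residual claim $x^{\delta} > \lm(S'(f_i, f_j))$ is then immediate: each of the two summands in the displayed formula has leading term $M_{ij}\, x^{\delta}$, so the $x^{\delta}$-parts cancel and the leading monomial of the difference is strictly smaller. The only real obstacle in executing this plan is bookkeeping with units and canonical $\lcm$ representatives: Corollary~\ref{syzygy} is phrased for pure $\underline{s}$-monomials, whereas the $\lc(f_i)$ are only $\underline{s}$-terms, so the unit-stripping $\lc(f_i) = u_i m_i$ and subsequent repackaging via $e_i \mapsto u_i^{-1} f_i$ must be done carefully enough to recognize $S'(f_i, f_j)$ on the nose. Beyond that, everything is a formal translation.
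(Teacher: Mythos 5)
Your proposal is correct and follows essentially the same route as the paper: strip units from the leading coefficients, observe that $\sum_i (c_iu_i)d_i=0$, invoke Corollary~\ref{syzygy}, and reassemble the pair-syzygies into $S'$-polynomials, with the final degree-drop claim following from cancellation of the $M_{ij}x^{\delta}$ terms. The only (harmless) difference is organizational: the paper first telescopes $\sum_i c_if_i$ into $\sum_{i<m} c_iu_id_iS(f_i,f_m)$ and then recombines using the antisymmetry $r_{ij}=-r_{ji}$, whereas you apply the $R$-linear map $e_i\mapsto u_i^{-1}f_i$ directly to the syzygy decomposition, which is a slightly cleaner bookkeeping of the same computation.
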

\begin{proof}
If the leading coefficient of $f_i$ is $u_id_i$, where $u_i$ is a unit and $d_i$ is an $\underline{s}$-monomial, then we define $p_i=f_i/u_id_i$. Clearly $p_i$ is in the polynomial ring over the total ring of fractions of $R$. Since $x^{\delta}>\lt\left(\sum_{i=1}^{m}c_if_i\right)$, $c_1u_1d_1+...+c_mu_md_m=0$. We have
\begin{gather*}
\sum_{i=1}^{m} c_if_i=\sum_{i=1}^{m} c_iu_id_ip_i=\sum_{i=1}^{m-1}c_iu_id_i(p_i-p_m)=\sum_{i=1}^{m-1}c_iu_id_iS(f_i,f_m).
\end{gather*}
Applying Corollary~\ref{syzygy} we see that for $1\le i\le m-1$, we can write all $c_iu_i$'s as the following
\begin{gather*}
c_iu_i=\sum_{j=1}^{m}r_{i,j}\frac{\lcm(d_i,d_j)}{d_i}\Rightarrow c_iu_id_iS(f_{i},f_m)=\sum_{j=1}^{m}r_{i,j}\lcm(d_i,d_j)S(f_{i},f_m),
\end{gather*}
where $r_{k,k}=0$, and $r_{i,j}=-r_{j,i}$. Therefore we have 
\begin{gather*}
\sum_{i=1}^{m-1}c_iu_id_iS(f_{i},f_m)=\sum_{i=1}^{m-1}\sum_{j=1}^{m}r_{i,j}\lcm(d_i,d_j)S(f_{i},f_m).
\end{gather*}
In the second sum for arbitrary $i$ and $j=m$, we have $r_{i,m}\lcm(d_i,d_m)S(f_{i},f_m)=r_{i,m}S^{'}(f_{i},f_m)$. Also, for $1\le i<j\le m-1$ we have
\begin{gather*}
r_{i,j}\lcm(d_i,d_j)S(f_{i},f_m)+r_{j,i}\lcm(d_j,d_i)S(f_{j},f_m)\\
=r_{i,j}\lcm(d_i,d_j)S(f_{i},f_m)-r_{i,j}\lcm(d_i,d_j)S(f_{j},f_m)\\
=r_{i,j}\lcm(d_i,d_j)S(f_{i},f_{j})=r_{i,j}S^{'}(f_{i},f_{j}).
\end{gather*}
This completes the proof.
\end{proof}

Using Lemma~\ref{pre-main-lemma1} and \cite[Theorem 4.1.12]{adams1994introduction} with a similar proof to the proof of \cite[Theorem 6]{cox2007ideals} we have the following result.

\begin{theorem}[Buchberger's Criterion]\label{Grobner-basis}
Let $G=\{f_1,...,f_m\}$ be a family of $\underline{s}$-monomial type polynomials in $S$ and $I=\langle f_1,...,f_m\rangle$. Then $G$ is a Gr\"{o}bner basis for $I$ iff for every $i,j$, $S^{'}(f_i,f_j)\rightarrow_G 0$.
\end{theorem}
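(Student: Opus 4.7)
The plan is to mimic the classical Buchberger argument, using Lemma~\ref{pre-main-lemma1} as the substitute for the usual ``syzygies of leading terms'' step. The forward direction is routine: since $S'(f_i,f_j)\in I$ and $G$ is a Gr\"obner basis, the leading term of $S'(f_i,f_j)$ lies in $\langle\lt(f_1),\dots,\lt(f_m)\rangle$, and a division-style procedure as in \cite[Theorem 4.1.12]{adams1994introduction} adapted to $\underline{s}$-terms produces a representation $S'(f_i,f_j)=\sum_k p_k f_k$ with $\lm(p_k)\lm(f_k)\le\lm(S'(f_i,f_j))$, i.e.\ $S'(f_i,f_j)\to_G 0$.

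For the converse, take any $f\in I$, write $f=\sum_i h_i f_i$, and set $x^\delta:=\max_i\lm(h_i)\lm(f_i)$. If $x^\delta=\lm(f)$ then $\lt(f)=\sum_{i\in A}\lt(h_i)\lt(f_i)\in\langle\lt(f_1),\dots,\lt(f_m)\rangle$, where $A$ is the set of indices achieving the maximum, and we are done. Otherwise $x^\delta>\lm(f)$, and I plan to construct a new representation of $f$ whose maximum $\lm(h_i)\lm(f_i)$ is strictly smaller than $x^\delta$. Iterating and invoking the well-ordering of monomials on $\mathbb{Z}_{\ge 0}^n$ then reduces us to the first case, which proves $\lt(I)\subseteq\langle\lt(f_1),\dots,\lt(f_m)\rangle$.

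To carry out the reduction step, for each $i\in A$ write $\lt(h_i)=c_i x^{\alpha_i}$ and set $g_i:=x^{\alpha_i}f_i$. Each $g_i$ is $\underline{s}$-monomial type with $\lm(g_i)=x^\delta$ and $\lc(g_i)=\lc(f_i)$, and the cancellation assumption forces $\lt\bigl(\sum_{i\in A}c_i g_i\bigr)<x^\delta$. Lemma~\ref{pre-main-lemma1} then expresses $\sum_{i\in A}c_i g_i$ as an $R$-linear combination $\sum_{i<j}r_{ij}S'(g_i,g_j)$ with $\lm(S'(g_i,g_j))<x^\delta$. Since $\lcm(\lm(g_i),\lm(g_j))=x^\delta$ is a multiple of $\lcm(\lm(f_i),\lm(f_j))$, a direct computation (using $\lc(g_i)=\lc(f_i)$) gives $S'(g_i,g_j)=x^{\beta_{ij}}S'(f_i,f_j)$ for an explicit monomial $x^{\beta_{ij}}$. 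The hypothesis $S'(f_i,f_j)\to_G 0$ therefore yields $S'(g_i,g_j)=\sum_k(x^{\beta_{ij}}p_k)f_k$ with $\lm(x^{\beta_{ij}}p_k)\lm(f_k)\le\lm(S'(g_i,g_j))<x^\delta$. Substituting these back into the top-stratum contribution $\sum_{i\in A}\lt(h_i)f_i$ (and leaving the parts $h_i-\lt(h_i)$ untouched) produces the desired new representation of $f$.

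The main obstacle is the book-keeping around leading coefficients: because we are working over a ring rather than a field, the prefactor $\lcm(\lc(f_i),\lc(f_j))$ in the definition of $S'$ is essential to stay inside $S$, and one must check carefully both that Lemma~\ref{pre-main-lemma1} produces genuine $S'$-polynomials (not merely $S$-polynomials over the total ring of fractions) and that multiplication by $x^{\beta_{ij}}$ preserves the inequality $\lm(p_k)\lm(f_k)\le\lm(S'(f_i,f_j))$, so that $S'(g_i,g_j)\to_G 0$ really does follow from $S'(f_i,f_j)\to_G 0$.
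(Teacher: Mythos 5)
Your proposal is correct and follows essentially the same route the paper intends: the paper does not write out a proof but explicitly defers to the classical argument of \cite[Theorem 6]{cox2007ideals} (and \cite[Theorem 4.1.12]{adams1994introduction}), with Lemma~\ref{pre-main-lemma1} playing the role of the cancellation lemma, which is exactly what you carry out. Your verification that $S'(g_i,g_j)=x^{\beta_{ij}}S'(f_i,f_j)$ with $\lcm(\lc(g_i),\lc(g_j))=\lcm(\lc(f_i),\lc(f_j))$ is precisely the book-keeping needed to make the classical descent on $x^\delta$ work over the base ring $R$.
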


\section{Gr\"{o}bner basis and binary quasi-minors}

\begin{definition}
An $n\times m$ quasi-matrix over $R$ is a rectangular array with $n$ rows and $m$ columns such that some entries may be empty.
	
A subquasi-matrix is a quasi-matrix that is obtained by deleting some rows, columns, or elements of a quasi-matrix.
\end{definition}

\begin{example}
	$$
	A=\begin{bmatrix*}
	a& &b\\
	c&d& \\
	e&f&g
	\end{bmatrix*}
	$$
	is a quasi-matrix and $\begin{bmatrix*}
	a& &b\\
	&d& 
	\end{bmatrix*}$ is a subquasi-matrix of $A$.
\end{example}

\begin{definition}
	A binary quasi-matrix is a quasi-matrix having exactly two elements in each nonempty row and column.
\end{definition}

\begin{example}
	All $3\times 3$ binary quasi-matrices are listed below:
	\begin{gather*}
	\begin{bmatrix*}
	a&b&\\
	&c&d\\
	e&&f
	\end{bmatrix*},\
	\begin{bmatrix*}
	a&b&\\
	c&&d\\
	&e&f
	\end{bmatrix*},\
	\begin{bmatrix*}
	a& &b\\
	&c&d\\
	e&f&
	\end{bmatrix*},\
	\begin{bmatrix*}
	a& &b\\
	c&d& \\
	&e&f
	\end{bmatrix*},\
	\begin{bmatrix*}
	&a&b\\
	c&d& \\
	e&&f
	\end{bmatrix*},\
	\begin{bmatrix*}
	&a&b\\
	c&&d \\
	e&f&
	\end{bmatrix*}
	\end{gather*}
\end{example}

Note that a binary quasi-matrix is a square matrix, up to deleting an empty row or column. Since we usually identify a quasi-matrix canonically with the one obtained by deleting any empty row or column, in the sequel we usually consider a binary quasi-matrix as a square matrix.

\begin{definition}
	Let $A=(a_{ij})$ be an $n\times n$ binary quasi-matrix over a ring $R$. A binary quasi-determinant of $A$ is an element
	$$
	a_{1\sigma(1)}a_{2\sigma(2)}\dots a_{n\sigma(n)}-a_{1\tau(1)}a_{2\tau(2)}\dots a_{n\tau(n)}
	$$
	where $\sigma,\tau$ are permutations of $\{1,2,\dots,n\}$ such that $\sigma(l)\neq\tau(l)$ for all $1\le l\le n$. A quasi-determinant of a binary subquasi-matrix $A$ is called a binary quasi-minor of $A$.
\end{definition}

Note that by definition, if $\delta$ is a binary quasi-determinant of a quasi-matrix, then so is $-\delta$. In the sequel, we will usually consider a given binary quasi-minor up to sign.

\begin{remark}
	(1) Note that the quasi-determinant of a $2\times 2$ binary quasi-matrix is equal to its determinant, up to sign. Hence all $2\times 2$ minors (which exist) of a quasi-matrix are binary quasi-minors.
	
	(2) Note that a quasi-determinant of a $3\times 3$ binary quasi-matrix is uniquely determined up to sign. However, in general it is not equal to the determinant, even up to sign, of
	the matrix obtained by assigning value zero to all empty positions.
	
	(3) For $n\ge4$, a quasi-determinant of a binary $n\times n$ quasi-matrix is not even unique, up to sign. For example consider the following binary quasi-matrix
	$$
	\begin{bmatrix*}
	a&b& & \\
	c&d& & \\
	& &e&f\\
	& &g&h
	\end{bmatrix*}.
	$$
	Then $adeh-bcgf$ and $adgf-bceh$ are both quasi-determinants.
\end{remark}

\begin{notation}
	If $A$ is a quasi-matrix with entries in $R$, then we denote the ideal generated by the binary quasi-minors of $A$ by $I_{bin}(A)$. 
\end{notation}

\begin{example}
	Consider the quasi-matrix $A$ as below:
	$$
	A=\begin{bmatrix*}
	a&b& \\
	c&d&e\\
	f& &g
	\end{bmatrix*},
	$$
	then $adg-bef$, $bef-adg$, $ad-bc$, $bc-ad$, $cg-fe$, and $fe-cg$ are all binary quasi-minors of $A$.
\end{example}

The next elementary result shows that the ideal of binary quasi-minors generalizes to the quasi-matrices the classical ideal of $2\times 2$ minors.

\begin{proposition}\label{minor-binary-quasi-minor} 
	Let A be a matrix. Then $I_{bin}(A)=I_2(A)$.
\end{proposition}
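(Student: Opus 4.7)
The plan is to establish both inclusions. For $I_2(A)\subseteq I_{bin}(A)$, the observation is that any $2\times 2$ submatrix of $A$ is already a $2\times 2$ binary quasi-matrix (each row and column has exactly two entries), and its ordinary determinant coincides, up to sign, with its unique binary quasi-determinant. Hence every $2\times 2$ minor of $A$ is a binary quasi-minor, giving the inclusion immediately.

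The substantive direction is $I_{bin}(A)\subseteq I_2(A)$. I would take an arbitrary binary quasi-minor
\[
\delta \;=\; a_{1\sigma(1)}a_{2\sigma(2)}\cdots a_{n\sigma(n)} \;-\; a_{1\tau(1)}a_{2\tau(2)}\cdots a_{n\tau(n)}
\]
of some $n\times n$ binary subquasi-matrix $B$ of $A$ and show $\delta\in I_2(A)$. The key structural point is that since each row of $B$ has exactly two filled entries and $\sigma(i)\ne\tau(i)$, the two filled positions in row $i$ are forced to be $(i,\sigma(i))$ and $(i,\tau(i))$, with an analogous statement for columns. Consequently, the bipartite graph $G$ on rows versus columns whose edges are the filled positions of $B$ is $2$-regular, and so decomposes as a disjoint union of even cycles $C_1,\dots,C_k$. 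On each cycle $C_s$ of length $2m_s$, traversed as $r_1-c_1-r_2-c_2-\cdots-r_{m_s}-c_{m_s}-r_1$, the matchings $\sigma$ and $\tau$ are forced to be the two alternating perfect matchings, so after relabeling $\sigma(r_i)=c_i$ and $\tau(r_i)=c_{i-1}$ (indices mod $m_s$).

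Writing $P_s=\prod_{i=1}^{m_s}a_{r_i,c_i}$ and $Q_s=\prod_{i=1}^{m_s}a_{r_i,c_{i-1}}$, the quasi-minor factors across cycles as $\delta=\prod_s P_s-\prod_s Q_s$. A standard telescoping identity shows this lies in the ideal generated by the differences $P_s-Q_s$, so it suffices to prove $P_s-Q_s\in I_2(A)$ for a single cycle. I would do this by induction on $m$: the base case $m=2$ is literally a $2\times 2$ minor of $A$, and for $m\ge 3$ I would apply the $2\times 2$ relation
\[
a_{r_{m-1},c_{m-1}}a_{r_m,c_m}\;\equiv\;a_{r_{m-1},c_m}a_{r_m,c_{m-1}}\pmod{I_2(A)}
\]
to rewrite $P_s$, then iteratively apply analogous relations with the row pairs $(r_{m-2},r_{m-1}),(r_{m-3},r_{m-2}),\dots$ and appropriate column pairs, effectively sliding the column index $c_m$ leftward through the product. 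After $m-1$ such substitutions the product $P_s$ is transformed into $Q_s$ modulo $I_2(A)$.

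The only real obstacle is bookkeeping: verifying that the $2$-regularity of $G$ genuinely forces the alternating structure of $\sigma$ and $\tau$, and that the chain of $2\times 2$ substitutions terminates at $Q_s$ without becoming blocked. Nothing deeper is needed, since every step uses only classical $2\times 2$ minor relations already available in $A$.
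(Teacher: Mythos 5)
Your proof is correct, and it organizes the reduction differently from the paper. The paper proves $I_{bin}(A)\subseteq I_2(A)$ by a single induction on the size $n$ of the quasi-minor $\delta=V_1\cdots V_n-W_1\cdots W_n$: it picks the entry $U$ of $A$ in the column of $V_1$ and row of $V_2$, writes $\delta=(V_1V_2-UW_1)V_3\cdots V_n+W_1(UV_3\cdots V_n-W_2\cdots W_n)$, and observes that the second factor is either a quasi-minor of size $n-1$ or (when $U$ equals some $W_i$) $W_i$ times a quasi-minor of size $n-2$. You instead first decompose the $2$-regular bipartite graph of filled positions into even cycles, factor $\delta=\prod_s P_s-\prod_s Q_s$ accordingly, telescope to reduce to a single cycle, and then perform the chain of $2\times 2$ swaps along that cycle. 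The core algebraic move is the same in both arguments --- a $2\times 2$ minor relation using an auxiliary entry of the full matrix $A$ that need not lie in the binary subquasi-matrix --- but your cycle decomposition makes explicit, up front, the case splitting that the paper handles implicitly through the ``$U$ is a $W_i$'' branch of its induction. What your version buys is a cleaner structural picture (the quasi-minor is a product of cycle-binomials, each handled by an explicit length-$(m-1)$ telescoping chain); what the paper's version buys is brevity, since one induction subsumes both the factorization and the sliding. Both arguments use in an essential way that $A$ is a genuine matrix, so the auxiliary entries exist; this is exactly where the statement fails for general quasi-matrices.
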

\begin{proof}
	It is enough to show that every binary quasi-minor in $A$ is an $R$-combination of $2\times 2$ minors. Let $\delta=V_1V_2\dots V_n-W_1W_2\dots W_n$ be an arbitrary binary quasi-minor. We induct on $n\ge2$. Since the result is clear for $n=2$, we may assume $n\ge3$ and that the result holds for binary quasi-minors of size $<n$.
	
	We may assume $V_1$ is in the same row with $W_1$ and $V_2$ is in the same column with $W_1$. Let $U$ be the entry of $A$ in the same column as $V_1$ and same row as $V_2$. Then 
	\begin{gather*}
	\begin{aligned}
	&&\delta&=\delta-UW_1V_3\dots V_n+UW_1V_3\dots Vn\\
	&&      &=(V_1V_2-UW_1)V_3\dots V_n+W_1(UV_3\dots V_n-W_2\dots W_n).
	\end{aligned}
	\end{gather*}
	If $U$ is not one of the $W$'s, then the subquasi-matrix obtained by deleting the first row and column involving $W_1$ and $V_2$, and containing $U$ is binary quasi-matrix, with $UV_3\dots V_n-W_2\dots W_n$ as an $(n-1)$-sized binary quasi-minor.
	
	On the other hand, if $U$ is a $W_i$, say $W_2$ (which can only happen if $n\ge4$), then $UV_3\dots V_n-W_2\dots W_n=W_2(V_3\dots V_n-W_3\dots W_n)$ and $V_3\dots V_n-W_3\dots W_n$ is a
	binary quasi-minor of A of size $(n-2)$. In either case, we are done by induction.
\end{proof}

We say that a quasi-matrix is $\textit{generic}$ over a ring if its entries are distinct variables. 

\begin{proposition}\label{Grobner-bases-minor-1} 
Let $A$ be a generic quasi-matrix over a polynomial ring over a ring $R$ and $\underline{s}=s_1,\dots,s_k$ be a permutable weak regular sequence and none of them is unit. Then the set of binary quasi-minors of $B=(\underline{s}|A)$ is a universal Gr\"{o}bner basis for the ideal $I_{bin}(B)$.
\end{proposition}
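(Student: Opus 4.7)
The plan is to apply Buchberger's Criterion (Theorem \ref{Grobner-basis}). First, I observe that every binary quasi-minor of $B = (\underline{s}|A)$ is of $\underline{s}$-monomial type: writing such a minor as $V_1\cdots V_n - W_1\cdots W_n$, each factor $V_i, W_i$ is either a variable entry from $A$ or one of the $s_j$ from the prepended column, so with respect to any monomial order the leading coefficient is a product of $s_j$'s, hence an $\underline{s}$-monomial. Thus, after fixing an arbitrary monomial order on the polynomial ring, the task reduces to showing that for every pair $f,g$ of binary quasi-minors of $B$, one has $S'(f,g) \to_G 0$, where $G$ denotes the full set of binary quasi-minors of $B$.

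For a pair $f, g \in G$ coming from binary subquasi-matrices $M_f$ and $M_g$ of $B$, I would split into cases based on how the $A$-entries chosen by $\lm(f)$ and $\lm(g)$ overlap, using the fact that the entries of $A$ are distinct variables so that $\gcd(\lm(f),\lm(g))$ records exactly the shared entries. When $\gcd(\lm(f),\lm(g)) = 1$, the relevant variable entries in $M_f$ and $M_g$ are disjoint and $\lcm(\lm(f),\lm(g)) = \lm(f)\lm(g)$; here the reduction should follow by combining the classical product-identity trick with Corollary \ref{syzygy} applied to the $\underline{s}$-monomials $\lc(f)$ and $\lc(g)$ to absorb any syzygies among their coefficients. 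When $\gcd(\lm(f),\lm(g)) \ne 1$, the shared variables correspond to positions selected simultaneously by the leading terms of both $f$ and $g$; the strategy is to form a larger binary subquasi-matrix of $B$ combining the entries of $M_f$ and $M_g$ and to rewrite $S'(f,g)$ as an $R$-combination of binary quasi-minors of that larger subquasi-matrix, in a way that respects the leading-monomial bound $\lm(f)\ge \lm(p_i)\lm(f_i)$ built into the definition of $\to_G$.

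The main obstacle I expect is the combinatorial bookkeeping in the second case, particularly when $\max(|M_f|, |M_g|) \ge 4$. As noted in the preceding remarks of the excerpt, binary quasi-matrices of size at least four may admit several essentially different quasi-determinants, so expressing $S'(f,g)$ as a suitable combination of other binary quasi-minors requires careful case analysis, most naturally organized as an induction either on $\max(|M_f|, |M_g|)$ or on the number of shared entries, along the lines of the induction on $n$ in the proof of Proposition \ref{minor-binary-quasi-minor}. The base case of $2\times 2$ binary quasi-minors reduces either to the classical $2\times 2$ minor identity (when no $s_j$ appears in $f$ or $g$) or to a direct small verification (when $s_j$'s from the first column are involved), and the technical heart of the proof will be reducing the higher-size cases to these small ones while maintaining the $\underline{s}$-monomial bookkeeping afforded by the Taylor-complex syzygies.
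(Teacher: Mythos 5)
Your framework is the right one and matches the paper's: fix an arbitrary monomial order, observe that every binary quasi-minor of $B=(\underline{s}|A)$ is of $\underline{s}$-monomial type, and invoke the generalized Buchberger Criterion (Theorem~\ref{Grobner-basis}) to reduce the claim to showing $S'(f,g)\rightarrow_G 0$ for every pair of binary quasi-minors. But the proposal stops exactly where the proof begins. The step you defer as ``combinatorial bookkeeping'' is the entire content of the argument, and the one concrete mechanism you do propose for the hard case does not work as stated: the union of the supports of $M_f$ and $M_g$ is in general \emph{not} a binary quasi-matrix, since a row or column can acquire three or four of the relevant entries, so there is no ``larger binary subquasi-matrix of $B$ combining the entries of $M_f$ and $M_g$'' whose quasi-minors you could use. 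Likewise, neither $\max(|M_f|,|M_g|)$ nor the number of shared entries is shown to decrease under any decomposition you describe, so the proposed induction has no well-founded descent.

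What the paper actually does, and what is missing from your plan, is the following. It sets $h=S'(f,g)$, takes the subquasi-matrix $C$ of $B$ spanned by all factors occurring in $h$ (with multiplicity), and proves an \emph{evenness property}: every nonempty row and column of $C$ contains either exactly two of these factors, one in each term of $h$, or exactly four, two in each term. This is encoded in a multigraph $H$ on the entries of $C$ with all vertex degrees $2$ or $4$. Two separate descents are then run by splitting $H$ along alternating vertical/horizontal circuits into subgraphs $H_1, H_2$ with associated binomials $h', h''$ satisfying $h=m_1h''+n_2h'$ (or the symmetric expression), together with an explicit verification that $\lt(h)\ge\min\{m_1n_2,m_2n_1\}$ so that the expression is standard: first to make both terms of $h$ squarefree, and then to decrease the invariant $\tau(h)$ counting rows and columns with four entries, until $\tau=0$, at which point the binomial is literally a binary quasi-minor. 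Your coprime case ($\gcd(\lm(f),\lm(g))=1$) also needs care, since the leading coefficients here are $\underline{s}$-monomials rather than units, so the classical ``coprime leading terms reduce automatically'' shortcut does not apply verbatim; the paper treats all pairs uniformly through the graph argument. As written, the proposal is a correct outline of the reduction to Buchberger's Criterion but contains a genuine gap where the decomposition of $S'(f,g)$ into a standard expression of binary quasi-minors should be.
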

\begin{proof}
	By Buchberger's Criterion, it is enough to show that for each pair of binary quasi-minors $f$ and $g$, the $S^{'}$-polynomial $S^{'}(f,g)$ reduces to zero modulo the set of binary quasi-minors. Let 
	$$
	f=V_1\dots V_n-W_1\dots W_n,\  \text{and}\ g=Y_1\dots Y_m-Z_1\dots Z_m.
	$$ 
	We may assume that $\lt(f)=V_1\dots V_n$ and $\lt(g)=Y_1\dots Y_m$. Then
	$$
	h=S^{'}(f,g)=-Y_1\dots Y_tW_1\dots W_n+V_1\dots V_sZ_1\dots Z_m,
	$$
	where we have reordered if necessary to assume that $Y_1,\dots,Y_t$ are exactly the $Y$'s that are not $V$'s, and $V_1,\dots,V_s$ are exactly the $V$'s that are not $Y$'s. We remark that $W_i,Y_i,Z_i,V_i$ are not necessarily distinct.
	
	We consider the subquasi-matrix $C$ of $B$ consisting of all the elements $W_i,Y_i,Z_i$ and $V_i$ that appear in $h$. If two of these elements coincide we say that the entry has multiplicity $2$ in the quasi-matrix $B$.
	
First of all, we show that each of $W_i$ and $Y_i$ in the first term of $h$ is in the same row with at least one of the $Z_i$ and $V_i$'s in the second term of $h$. Clearly every $Y_i$ is in the same row with a $Z_j$. On the other hand, for a $W_i$, it is in the same row as a $V_j$; if $j\le s$ we are done. But if $j>s$, since $V_j$ is in the same row as a $Z_k$, it follows that $W_i$ is in the same row with this $Z_k$. The same argument can be made for elements in the second term and for columns. 
	
Second, we show that in every row (column) of $C$ we have either exactly two of $W_i,Z_i,V_i,Y_i$ or exactly four of them. To verify this, it suffices to show that if three of them are in the same row (resp. column), then fourth one is also in this row (resp. column). If $W_i,Y_j,V_k$ are in the same row then some $Z_l$ is too. On the other hand, let $W_i,Y_j,Z_k$ be in the same row. If $W_i$ is in the same row (resp. column) with a $V_l$, for $l>s$, then $Y_j$ and $Y_p$ ($p>t$) are in the same row, which is not possible. Hence $W_i$ is in the same row with a $V_l$ for $l\le s$. The remaining cases follow by symmetry, and the case for columns is similar.
	
Now we see that for the binomial $h$, a row (resp. column) of $C$ contains either two factors (counting multiplicity), each appearing in separate terms, or four factors appearing in each term in two pairs. We say that $h$ and $C$ have the evenness property. 
	
If the two terms in $h$ have a common factor, say $W=W_i=Z_j$, then we consider the polynomial $h^{'}=h/W$, and the corresponding quasi-submatrix $C^{'}$ obtained by deleting $W$. Then both still have the evenness property. To produce a standard expression of $h$, it is enough to produce one for $h^{'}$. By factoring out all such common factors, we may reduce to the case that the binomial $h$ has relatively prime terms. We again denote the resulting binomial by $h$ and the corresponding quasi-submatrix by $C$.
	
Next, we associate to $h$ a (multi-)graph $H$ as follows: vertices of $H$ are the entries of $C$. If only two factors lie in a row (or column) and appear in $h$ in opposite terms, then they are joined by an edge in $H$. On the other hand, if there are four factors in a row (or column), $V,W,Y,Z$ (counting multiplicities), then we attach the vertices $W$ and $Z$, and the vertices $V$ and $Y$. This is an arbitrary choice. Then in $H$, the degree of any vertex is 2 or 4. In the graph $H$, we refer to edges as being either horizontal or vertical, depending on the positioning of the entries of the corresponding quasi-matrix.
	
In the next step, we reduce to the case that both terms in the binomial $h$ are squarefree. It is clear that the exponent of every variable in both terms of $h$ is at most 2. Suppose that $h$ has such a non-squarefree term. We choose a factor $W$ having multiplicity 2. We choose a circuit $H_1$ starting at $W$, whose edges are successively vertical and horizontal, and starts initially vertically. We consider $H_1$ as a subgraph and let $H_2$ be the subgraph by removing all edges of $H_1$ and remaining isolated vertices. The vertices of $H_1$ and $H_2$ form two subquasi-matrices of $C$ and the element $W$ appears in both of them with one other element in its row and one other element in its column. Associated to the corresponding subquasi-matrices we have associated binomials
	$$
	h^{'}=m_1-m_2,\ h^{''}=n_1-n_2,
	$$
	where $m_i$ (resp. $n_i$) is the product of the factors in $H_1$ (resp. $H_2$) in the $i$-th term of $h$. Each polynomial and its associated subquasi-matrix has the evenness property and the factor $W$ now has multiplicity one in each graph and polynomial. Moreover,
	$$
	h=m_1n_1-m_2n_2.
	$$
	We claim that $\lt(h)\ge \min\{m_1n_2,m_2n_1\}$. If not, then
	$$
	m_1n_1\le\lt(h)<m_1n_2\Rightarrow n_1<n_2
	$$
	and
	$$
	m_2n_2\le\lt(h)<m_2n_1\Rightarrow n_2<n_1
	$$
	which is a contradiction.
	
	Assume $\lt(h)\ge m_1n_2$. Then
	\begin{gather*}
	\begin{aligned}
	&&h&=m_1n_1-m_2n_2\\
	&& &=m_1n_1-m_2n_2-m_1n_2+m_1n_2\\
	&& &=m_1(n_1-n_2)+n_2(m_1-m_2)\\
	&& &=m_1h^{''}+n_2h^{'}.
	\end{aligned}
	\end{gather*}
	Further, $\lt(m_1h^{''})=m_1\lt(h^{''})=m_1\lt(n_1-n_2)$, $\lt(n_2h^{'})=n_2\lt(h^{'})=n_2\lt(m_1-m_2)$ and both are $\le\lt(h)$ by our assumption. This proves the claim.
	
	Repeating this construction for every factor of multiplicity $>1$, we eventually obtain a standard expression
	$$
	h=\sum m_ih_i\ (*)
	$$
	with $m_i$ monomial, $\lt(h)\ge\lt(m_ih_i)$ for all $i$, and $h_i$ is a binomial with relatively prime terms, all factors of multiplicity 1, and the $h_i$ and their associated subquasi-matrix has the evenness property.
	
	To complete the proof, we claim that there is a standard expression $(*)$ in which each $h_i$ is a binary quasi-minor. To accomplish this, for any binomial $b=h_i$ as above, we let $\tau(b)$ denote the number of rows and columns of the associated subquasi-matrix of $b$ having four (necessarily distinct) entries. We will show that if $\tau(b)>0$, then there is a pair $(b^{'},b^{''})$ as above, such that $\tau(b^{'})<\tau(b)$ and $\tau(b^{''})<\tau(b)$. Applying this repeatedly, we obtain a standard expression $(*)$ in which every binomial $h_i$ that appears satisfies $\tau(h_i)=0$. But a binomial $b$ with $\tau(b)=0$ is precisely a binary quasi-minor of $B$, which would prove the claim.
	
	To verify that we may decrease $\tau$ in the prescribed manner, suppose that $b=h_i$ and $\tau(b)>0$, with a row, say, with entries $W,Y,Z,V$. We again consider the graph $H$ of $b$, in this case every vertex now having degree 2. We note that the graph $H$ is not necessarily connected. Indeed, starting vertically from a vertex $W$ two cases occur. The path reaches $Z$ without passing through $V$ or $Y$, or before the path reaches $Z$ it reaches $V$ or $Y$ (actually the $Y$ cannot be reached vertically). In the second case we remove edges $WZ$ and $VY$ from $H$, and instead we add edges $VW$ and $YZ$ to $H$. In either case we have a circuit $H_1$ in which only two vertices lie in it. Again we let $H_2$ be the subgraph by removing all edges of $H_1$ and remaining isolated vertices. Now for the corresponding binomials $b^{'}$ and $b^{''}$ as above, we obtain that $\tau(b^{'})<\tau(b)$ and $\tau(b{''})<\tau(b)$. This completes the proof.
\end{proof}

\section{Equations of the multi-Rees algebra}

We fix a permutable weak regular sequence $\underline{s}=s_1,\dots,s_n$ in any ring $R$, where none of them is a unit. Let $R[I_1^{a_1}t_1,\dots,I_r^{a_r}t_r]$ be the multi-Rees algebra of powers of ideals $I_i$, where the $a_i$'s are positive integers and the ideals are generated by arbitrary subsets of $\underline{s}$. In the rest of this paper by generators of $I_i$ we mean these generators. We denote $\boldsymbol{a}=a_1,\dots,a_r$.

\begin{definition}
	Let $a$ be positive integer, $\mathscr{T}_{a}$ and $\mathscr{T}_{a}^{'}$ denote the sets in $\left(\mathbb{Z}_{\ge 0}\right)^{n-1}$ and $\mathbb{N}^{n-1}$ respectively, that are defined as follows:
	$$
	\mathscr{T}_{a}:=\{\underline{j}=(j_{n-1},...,j_1)\ | \ 0=j_0\le j_1 \le j_2 \le \dots \le j_{n-1} \le j_n=a \}
	$$
	$$
	\mathscr{T}_{a}^{'}:=\{\underline{j}=(j_{n-1},...,j_1)\ | \ 1=j_0\le j_1 \le j_2 \le \dots \le j_{n-1} \le j_n=a \}.
	$$
\end{definition}

Note that the cardinality of $\mathscr{T}_{a}$ (resp. $\mathscr{T}_{a}^{'}$) is $\binom{a+n-1}{n-1}$ (resp. $\binom{a+n-2}{n-1}$).
\begin{definition}\label{defpower}
	For $\underline{j}\in\mathscr{T}_{a}$ we define
	\begin{gather*}
	s^{\underline{j}}:=\prod_{i=1}^{n} s_{i}^{j_{i}-j_{i-1}}.
	\end{gather*}
\end{definition}

\begin{definition}
	We define the function 
	$$\underline{j}^{|k\rangle}:\mathscr{T}_{a}^{'}\rightarrow\mathscr{T}_{a}\ \text{by}\ \underline{j}^{|k\rangle}((j_{n-1},...,j_1))=(j_{n-1},...,j_k,j_{k-1}-1,...,j_{1}-1)
	$$ 
	where $1\le k\le n$. For convenience instead of $\underline{j}^{|k\rangle}((j_{n-1},...,j_1))$ we write $\underline{j}^{|k\rangle}$.
\end{definition}

\begin{definition}
	Let $a$ be a positive integer. We define $\mathscr{F}_{a}^l$ as a subset of $\mathscr{T}_{a}$ such that $\underline{j}\in\mathscr{F}_{a}^l$ if and only if $s^{\underline{j}}\in I_l^a$.
\end{definition}

Since $I_l^{a_l}$ is generated by all $s^{\underline{j}}$'s ($\underline{j}\in\mathscr{F}_{a_l}^l$), the multi-Rees algebra $R[I_1^{a_1}t_1,\dots,I_r^{a_r}t_r]$ is the ring $R\left[\{s^{\underline{j}}t_l\}_{1\le l\le r,\underline{j}\in\mathscr{F}_{a_l}^l}\right]$. Let $S:=R\left[\{T_{l,\underline{j}}\}_{1\le l\le r,\underline{j}\in\mathscr{F}_{a_l}^l}\right]$. We define and fix the $R$-algebra epimorphism 
$$
\phi : S \to R[I_1^{a_1}t_1,\dots, I_r^{a_r}t_r], \ \text{by} \ \phi(T_{l,\underline{j}})=s^{\underline{j}}t_l.
$$
We want to find generators of $\mathscr{L}=\ker(\phi)$.

\begin{definition}
	For a fixed $l$, consider the set $\{(l,\underline{j});\ \underline{j}\in\mathscr{T}_{a_l}^{'}\}$ ordered lexicographically as a subset of $\mathbb{N}^n$. We define the matrix $B_{a_l}$, whose entry in row $k$ and column $(l,\underline{j})$ is $T_{l,\underline{j}^{|k\rangle}}$.
\end{definition} 

We see that $\phi(T_{l,\underline{j}^{|k\rangle}})$ contains at least a factor of $s_k$. Let $I_l=\langle s_{k_1},\dots,s_{k_v}\rangle$, and $k_1<k_2<\dots<k_v$. Then the only possible $T_{l,\underline{j}^{|k\rangle}}$'s whose images under $\phi$ are monomials in $s_{k_1},\dots,s_{k_v}$ are in rows $k_1,\dots,k_v$ of $B_{a_l}$. On the other hand for $u<w$, if we compare images of $T_{l,\underline{j}^{|u\rangle}}$ and $T_{l,\underline{j}^{|w\rangle}}$, 
$$
\phi(T_{l,\underline{j}^{|u\rangle}})=s_1^{j_1-1-0}s_2^{j_2-j_1}\dots s_{u-1}^{j_{u-1}-j_{u-2}}s_u^{j_u-j_{u-1}+1}\dots s_w^{j_w-j_{w-1}}\dots s_n^{j_n-j_{n-1}},
$$
$$
\phi(T_{l,\underline{j}^{|w\rangle}})=s_1^{j_1-1-0}s_2^{j_2-j_1}\dots s_u^{j_u-j_{u-1}}\dots s_{w-1}^{j_{w-1}-j_{w-2}}s_w^{j_w-j_{w-1}+1}\dots s_n^{j_n-j_{n-1}},
$$
then we see that when we move on the column $(l,\underline{j})$ from row $u$ to row $w$ we lose one factor of $s_u$ and we get one factor of $s_w$. 

\begin{definition}
	In the matrix $B_{a_l}$, in the row $k_1$, we choose $T_{l,\underline{j}^{|k\rangle}}$'s whose images under $\phi$ are monomials in $s_{k_1},\dots,s_{k_v}$. We define the quasi-matrix $D_{a_l}$ to be the subquasi-matrix of $B_{a_l}$ by choosing these columns and rows $k_i$, $1\le i\le v$. The entries of the submatrix $D_{a_l}$ are all $T_{l,\underline{j}^{|k\rangle}}$'s in $B_{a_l}$ whose images under $\phi$ are monomials in $s_{k_1},\dots,s_{k_v}$. 
	
	We define the matrix $B_{\boldsymbol{a}}\coloneqq(B_{a_1}|B_{a_2}|\dots|B_{a_r})$ and the matrix $C_{\boldsymbol{a}}\coloneqq(\underline{s}|B_{\boldsymbol{a}})$. We also define the subquasi-matrices $D_{\boldsymbol{a}}\coloneqq(D_{a_1}|D_{a_2}|\dots|D_{a_r})$ and $E_{\boldsymbol{a}}\coloneqq(\underline{s}|D_{\boldsymbol{a}})$ of $C_{\boldsymbol{a}}$.
\end{definition}

The theorem 
below describes the defining equations of the multi-Rees algebra $R[I_1^{a_1}t_1,\dots,I_r^{a_r}t_r]$.

\begin{theorem}\label{main-result-1}
	Let $R$ be a Noetherian ring and suppose that ideals $I_i$ are generated by subsets of a fixed permutable weak regular sequence $s_1,\dots,s_n$, where none of them is a unit. Then
	$$
	R[I_1^{a_1}t_1,\dots,I_r^{a_r}t_r]\cong S/I_{bin}(E_{\boldsymbol{a}}).
	$$
\end{theorem}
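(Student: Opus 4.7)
The plan is to prove the theorem in two stages, as announced in the introduction: first handle $\boldsymbol{a}=(1,\dots,1)$ using the generalized Buchberger machinery of Theorem~\ref{Grobner-basis}, and then extend to arbitrary $\boldsymbol{a}$ by the strategy of \cite{jabarnejad2016rees}.

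The containment $I_{bin}(E_{\boldsymbol{a}})\subseteq\mathscr{L}=\ker(\phi)$ is direct. Every binary quasi-minor of $E_{\boldsymbol{a}}$ is either a $2\times 2$ minor involving the column $\underline{s}$ with one column of $D_{\boldsymbol{a}}$, or a binary quasi-minor supported entirely in $D_{\boldsymbol{a}}$. Using the explicit formula for $\phi(T_{l,\underline{j}^{|k\rangle}})$ displayed before the definition of $D_{a_l}$ -- moving one row along a column of $D_{\boldsymbol{a}}$ trades a factor $s_u$ for $s_w$ -- the two permutation products $\prod_\ell a_{\ell,\sigma(\ell)}$ and $\prod_\ell a_{\ell,\tau(\ell)}$ evaluate under $\phi$ to the same element, so their difference lies in $\mathscr{L}$.

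For the reverse inclusion with $\boldsymbol{a}=(1,\dots,1)$, the entries of $D_{\boldsymbol{a}}$ are distinct indeterminates, so Proposition~\ref{Grobner-bases-minor-1} applies to $E_{\boldsymbol{a}}=(\underline{s}\mid D_{\boldsymbol{a}})$ and the binary quasi-minors form a universal Gröbner basis of $J:=I_{bin}(E_{\boldsymbol{a}})$. I would fix a monomial order refining $(t_1,\dots,t_r)$-total degree so that every leading term of a binary quasi-minor is an $\underline{s}$-term times a pure $T$-monomial. Given $f\in\mathscr{L}$, reduce $f$ modulo this basis in the sense of our reduction relation to obtain a remainder $\bar f$ whose monomials are standard -- none divisible by a leading term of a binary quasi-minor. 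The crucial step is then to show that distinct standard monomials have $R$-linearly independent images in the multi-Rees algebra: after splitting by $(t_1,\dots,t_r)$-multidegree this reduces to a syzygy statement about products of $\underline{s}$-monomial generators of $I_1^{b_1}\cdots I_r^{b_r}$, controlled by Corollary~\ref{syzygy}. Independence then forces $\phi(\bar f)=\phi(f)=0$ and thus $\bar f=0$, so $f\in J$.

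To pass from $\boldsymbol{a}=(1,\dots,1)$ to arbitrary $\boldsymbol{a}$, follow \cite{jabarnejad2016rees}: realize $R[I_1^{a_1}t_1,\dots,I_r^{a_r}t_r]$ as an $R$-subalgebra of the linear multi-Rees algebra $R[I_1u_1,\dots,I_ru_r]$ via the Veronese-style substitution $t_l\mapsto u_l^{a_l}$ on presentations. Each monomial generator $s^{\underline{j}}t_l$ of the target corresponds, through the operator $\underline{j}^{|k\rangle}$, to a product of $a_l$ linear Rees generators of $I_l u_l$, and the combinatorial enumeration of such factorizations is precisely what $\mathscr{T}_{a_l}^{'}$ and the columns of $D_{a_l}$ encode. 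Every kernel generator of the pullback is either inherited from the linear case -- already a binary quasi-minor of $(\underline{s}\mid D_{\boldsymbol{1}})$ -- or expresses the equality of two distinct factorizations of a single Veronese monomial, and both types are captured by binary quasi-minors of the enlarged quasi-matrix $E_{\boldsymbol{a}}$. The main obstacle throughout is the $R$-linear-independence step in the linear case: without the permutable weak regular sequence hypothesis one cannot compute the syzygy module of $\underline{s}$-monomials via Corollary~\ref{syzygy}, and the standard remainder $\bar f$ need not vanish; this is exactly what motivated the machinery of Lemma~\ref{pre-syzygy}, Corollary~\ref{syzygy}, and the generalized Buchberger criterion developed in Section~2. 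Once that step is settled, the extension to arbitrary $\boldsymbol{a}$ is primarily combinatorial.
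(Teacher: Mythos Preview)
Your outline agrees with the paper on the easy inclusion $I_{bin}(E_{\boldsymbol{a}})\subseteq\mathscr{L}$ and on the Veronese reduction from general $\boldsymbol{a}$ to $\boldsymbol{a}=(1,\dots,1)$ via \cite{jabarnejad2016rees}. In the linear case, however, you take a different route from the paper, and that route has a gap at exactly the step you flag as ``the main obstacle.''

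The paper does \emph{not} apply Proposition~\ref{Grobner-bases-minor-1} directly to $E_{\boldsymbol{a}}$ and then argue independence of standard monomials. Instead it enlarges the picture: it embeds $S=R[\{T_{l,\underline{j}}\}_{\underline{j}\in\mathscr{F}_1^l}]$ into $R[\{T_{l,\underline{j}}\}_{\underline{j}\in\mathscr{T}_1}]$, whose presentation matrix $C_{\boldsymbol{a}}=(\underline{s}\mid B_{\boldsymbol{a}})$ is an honest matrix with no empty entries, and whose target is the multi-Rees algebra of the single ideal $I=\langle s_1,\dots,s_n\rangle$. Since $I$ is of linear type (a permutable weak regular sequence, or $I=R$), Ribbe's \cite[Proposition~3.1]{ribbe1999defining} together with Corollary~\ref{syzygy} already gives $\ker(\phi')=I_2(C_{\boldsymbol{a}})=I_{bin}(C_{\boldsymbol{a}})$ outright. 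Only then is Proposition~\ref{Grobner-bases-minor-1} invoked, on $C_{\boldsymbol{a}}$, with a lex order in which every variable outside $E_{\boldsymbol{a}}$ dominates every variable inside. This is an elimination argument: if $f\in I_{bin}(C_{\boldsymbol{a}})$ involves only the variables of $E_{\boldsymbol{a}}$, the Gr\"obner reduction never introduces an outside variable, so $f\in I_{bin}(E_{\boldsymbol{a}})$.

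Your direct approach needs the assertion that distinct standard monomials for $I_{bin}(E_{\boldsymbol{a}})$ have $R$-linearly independent images under $\phi$, and the appeal to Corollary~\ref{syzygy} does not establish this. That corollary describes the syzygy module of a family of $\underline{s}$-monomials; it does not say the syzygies are trivial. Concretely, if two distinct standard $T$-monomials $m_1,m_2$ of the same $t$-multidegree had $\phi(m_1)=\phi(m_2)$, then $m_1-m_2\in\mathscr{L}$ would already violate your independence claim, and nothing in Corollary~\ref{syzygy} rules this out---ruling it out is tantamount to knowing $m_1-m_2\in I_{bin}(E_{\boldsymbol{a}})$, which is what you are trying to prove. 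More generally, a nonzero standard remainder $\bar f$ with $\phi(\bar f)=0$ yields, via Corollary~\ref{syzygy}, an expression of $\bar f$ as an $R$-combination of binomials in $\mathscr{L}$, but you have no mechanism to place those binomials in $I_{bin}(E_{\boldsymbol{a}})$ without circularity. The paper avoids this entirely by importing the kernel computation from the full-matrix, linear-type situation where it is already known, and using the Gr\"obner basis only to descend.
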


\begin{remark}
	We will show that the defining ideal is generated by the $2\times2$ minors of $E=E_{\boldsymbol{a}}$ involving $s_1,\dots,s_n$, the $2\times2$ minors of the $D_{a_l}$, and the binary quasi-minors of $E$ (which are not minors) and have at most two entries from each $D_{a_l}$.
\end{remark}

\begin{proof}
	First we show that $I_{bin}(E_{\boldsymbol{a}})\subseteq \mathscr{L}$. If $f$ is a $T$-binary quasi-minor, then 
	$$
	f=T_{l_1,{\underline{j}^{(1)}}^{|k_{i_1}\rangle}}T_{l_2,{\underline{j}^{(2)}}^{|k_{i_2}\rangle}}\dots T_{l_\beta,{\underline{j}^{(\beta)}}^{|k_{i_\beta}\rangle}}-T_{l_1,{\underline{j}^{(1)}}^{|k_{u_1}\rangle}}T_{l_2,{\underline{j}^{(2)}}^{|k_{u_2}\rangle}}\dots T_{l_\beta,{\underline{j}^{(\beta)}}^{|k_{u_\beta}\rangle}}.
	$$
	For arbitrary $T_{l_v,{\underline{j}^{(v)}}^{|k_{i_v}\rangle}}$ we have
	$$
	\phi(T_{l_v,{\underline{j}^{(v)}}^{|k_{i_v}\rangle}})=t_{l_v}s_{k_{i_v}}\prod_{\alpha=1}^{n}s_\alpha^{j_\alpha^{(v)}-j_{\alpha-1}^{(v)}},\ \text{with}\ \underline{j}^{(v)}\in \mathscr{T}_{a_{l_v}}^{'},
	$$
	and similarly
	$$
	\phi(T_{l_v,{\underline{j}^{(v)}}^{|k_{u_v}\rangle}})=t_{l_v}s_{k_{u_v}}\prod_{\alpha=1}^{n}s_\alpha^{j_\alpha^{(v)}-j_{\alpha-1}^{(v)}},\ \text{with}\ \underline{j}^{(v)}\in \mathscr{T}_{a_{l_v}}^{'}.
	$$
	Hence we have
	\begin{gather*}
	\phi(f)=s_{k_{i_1}}s_{k_{i_2}}\dots s_{k_{i_\beta}}t_{l_1}t_{l_2}\dots t_{l_\beta}\prod_{\alpha=1}^{n}s_\alpha^{j_\alpha^{(1)}-j_{\alpha-1}^{(1)}}\prod_{\alpha=1}^{n}s_\alpha^{j_\alpha^{(2)}-j_{\alpha-1}^{(2)}}\dots\prod_{\alpha=1}^{n}s_\alpha^{j_\alpha^{(\beta)}-j_{\alpha-1}^{(\beta)}}\\
	-s_{k_{u_1}}s_{k_{u_2}}\dots s_{k_{u_\beta}}t_{l_1}t_{l_2}\dots t_{l_\beta}\prod_{\alpha=1}^{n}s_\alpha^{j_\alpha^{(1)}-j_{\alpha-1}^{(1)}}\prod_{\alpha=1}^{n}s_\alpha^{j_\alpha^{(2)}-j_{\alpha-1}^{(2)}}\dots\prod_{\alpha=1}^{n}s_\alpha^{j_\alpha^{(\beta)}-j_{\alpha-1}^{(\beta)}}=0.
	\end{gather*}
	If $f$ is an $\underline{s}$-binary quasi-minor, then 
	$$
	f=s_{k_{i_1}}T_{l_2,{\underline{j}^{(2)}}^{|k_{i_2}\rangle}}\dots T_{l_\beta,{\underline{j}^{(\beta)}}^{|k_{i_\beta}\rangle}}-s_{k_{u_1}}T_{l_2,{\underline{j}^{(2)}}^{|k_{u_2}\rangle}}\dots T_{l_\beta,{\underline{j}^{(\beta)}}^{|k_{u_\beta}\rangle}},
	$$
	and similarly we see that $\phi(f)=0$:
	\begin{gather*}
	\phi(f)=s_{k_{i_1}}s_{k_{i_2}}\dots s_{k_{i_\beta}}t_{l_2}\dots t_{l_\beta}\prod_{\alpha=1}^{n}s_\alpha^{j_\alpha^{(2)}-j_{\alpha-1}^{(2)}}\dots\prod_{\alpha=1}^{n}s_\alpha^{j_\alpha^{(\beta)}-j_{\alpha-1}^{(\beta)}}\\
	-s_{k_{u_1}}s_{k_{u_2}}\dots s_{k_{u_\beta}}t_{l_2}\dots t_{l_\beta}\prod_{\alpha=1}^{n}s_\alpha^{j_\alpha^{(2)}-j_{\alpha-1}^{(2)}}\dots\prod_{\alpha=1}^{n}s_\alpha^{j_\alpha^{(\beta)}-j_{\alpha-1}^{(\beta)}}=0.
	\end{gather*}
	Thus $I_{bin}(E_{\boldsymbol{a}})\subseteq \mathscr{L}$. Now we prove that $\mathscr{L}\subseteq I_{bin}(E_{\boldsymbol{a}})$. We prove this claim in two steps. First we prove the claim for the case that $\boldsymbol{a}=(1,\dots,1)$.
	
	We consider the ideal $I=\langle s_1\dots,s_n\rangle$. Then we have the following commutative diagram:
	$$
	\begin{tikzcd}
	R\left[\{T_{l,\underline{j}}\}_{1\le l\le r,\underline{j}\in\mathscr{F}_{1}^l}\right]\arrow[hook]{d}{\theta_1}\arrow[two heads]{r}{\phi}                 &R[I_1t_1,\dots,I_rt_r]\arrow[hook]{d}{\theta_2}\\
	R\left[\{T_{l,\underline{j}}\}_{1\le l\le r,\underline{j}\in\mathscr{T}_{1}}\right]\arrow[two heads]{r}{\phi^{'}}&R[It_1,\dots,It_r],
	\end{tikzcd}
	$$ 
	where $\phi^{'}(T_{l,\underline{j}})=s^{\underline{j}}t_l$. For any $f\in\mathscr{L}$, we have
	$$
	\phi^{'}(f)=\phi^{'}\theta_1(f)=\theta_2\phi(f)=0\Rightarrow f\in\ker(\phi^{'}).
	$$
If $\underline{s}=s_1,\dots,s_n$ is a permutable regular sequence, then by \cite[Proposition 3.1]{ribbe1999defining}, and \cite[Corollary 5.5.5]{huneke2006integral}, $f\in I_2(C_{\boldsymbol{a}})$. In the case that $\underline{s}=s_1,\dots,s_n$ is not a regular sequence, clearly $I=R$ is an ideal of linear type, then by Corollary~\ref{syzygy} and \cite[Proposition 3.1]{ribbe1999defining} $f\in I_2(C_{\boldsymbol{a}})$. By Proposition~\ref{minor-binary-quasi-minor}, $f\in I_{bin}(C_{\boldsymbol{a}})$. 
	
We put a total order on variables of $C_{\boldsymbol{a}}$ such that all variables which are not in $E_{\boldsymbol{a}}$ are greater than all variables which are in $E_{\boldsymbol{a}}$. We also put the lexicographic order on $R\left[\{T_{l,\underline{j}}\}_{1\le l\le r,\underline{j}\in\mathscr{T}_{1}}\right]$. By Proposition~\ref{Grobner-bases-minor-1} the set of all binary quasi-minors of $C_{\boldsymbol{a}}$ form a Gr\"{o}bner basis for $I_{bin}(C_{\boldsymbol{a}})$, and so $\lt(f)$ can be generated by leading terms of binary quasi-minors of $C_{\boldsymbol{a}}$. Hence we have an expression $\lt(f)=\sum r_if_i\lt(g_i)$, where $g_i$'s are binary quasi-minors, $r_i\in R$, and $f_i$'s are monomials. We take $h_1=\sum r_if_ig_i$, and we see that $\lt(h_1)=\lt(f)$. On the other hand the variables in $\lt(g_i)$ are some of the variables in $f$ and so they are variables in $E_{\boldsymbol{a}}$. Since we have the lexicographic order and all variables out of $E_{\boldsymbol{a}}$ are greater than these variables, variables in non-leading terms of $g_i$ are also in $E_{\boldsymbol{a}}$. It is also clear that variables in $f_i$ are some of variables of $f$. Hence $h_1$ is a polynomial with variables in $E_{\boldsymbol{a}}$ and $g_i$'s are binary quasi-minors in $E_{\boldsymbol{a}}$. Therefore $h_1\in I_{bin}(E_{\boldsymbol{a}})$, $f-h_1\in I_{bin}(C_{\boldsymbol{a}})$, and variables in $f-h_1$ are in $E_{\boldsymbol{a}}$. Moreover $\lt(f)>_{lex} \lt(f-h_1)$. We continue this procedure until $\lt(f-h_1-h_2-\dots-h_m)=0$. Hence $f=h_1+h_2+\dots+h_m$ and so $f\in I_{bin}(E_{\boldsymbol{a}})$.
	
	Now we prove the Theorem for every positive $a_i$. The proof in this step is similar to \cite{jabarnejad2016rees}, and here we only outline it. 
	
	Let $Y=(x_{i,l})$ be a generic $n\times r$ matrix and let $X$ be a subquasi-matrix of $Y$ defined as follows: an arbitrary $x_{i,l}$ is an entry of $X$ if $s_i$ is between generators of $I_l$. We define $x_{l}^{\underline{j}}$ to be the element $s^{\underline{j}}$, where we replace $s_i$'s with the variables in the $l$-column of $Y$. Let $A_l$ be the $a_l$-th Veronese subring $R\left[\{x_{i,l}\}_{x_{i,l}\ \text{is in the}\ l\text{-column of}\ X} \right]^{(a_l)}$. We also define $X_l(a)$ to be the family of monomials of degree $a$ ($a\in \mathbb{N}$) in the variables that are in the $l$-column of $X$. If $A$ is the $\boldsymbol{a}$-th Veronese subring $R[X]^{\boldsymbol{a}}=R[X_1(a_1),X_2(a_2),...,X_r(a_r)]$ and $\alpha : S\rightarrow A$, then we see that
	$$
	\ker(\alpha)=\langle I_2(D_{a_1}),I_2(D_{a_2}), ... ,I_2(D_{a_r})\rangle\subseteq I_{bin}(E_{\boldsymbol{a}}).
	$$
	
	We define the quasi-matrix $Z=(\underline{s}|X)$. We define the map 
	$$
	\psi : R[X] \rightarrow R[I_1u_1,I_2u_2,\dots,I_ru_r], \quad \psi(x_{i,l})=s_iu_l
	$$
	then by the first part of the proof $\ker(\psi)$ is generated by $2\times 2$ $\underline{s}$-minors and $x$-binary minors (binary minors which don't contain $s_i$) of $Z$.
	
	We have similar digram to \cite{jabarnejad2016rees}, and we can define the map $g: A\rightarrow R[I_1^{a_1}t_1,\dots,I_r^{a_r}t_r]$. We see that the kernel of $g$ is generated by polynomials of the form 
	$$
	s_\delta x_{\gamma,l}x_l^{\underline{j}}-s_\gamma x_{\delta,l}x_l^{\underline{j}},\ \underline{j}\in \mathscr{F}_{a_l-1}^l
	$$
	and
	\begin{gather*}
	\prod_{i=1}^{v}x_{\delta_i,\gamma_i}x^{\underline{j}^{(i)}}_{\gamma_i}-\prod_{i=1}^{v}x_{\beta_i,\gamma_i}x^{\underline{j}^{(i)}}_{\gamma_i}, \ \underline{j}^{(i)}\in \mathscr{F}_{a_{\gamma_i-1}^{\gamma_i}}, 
	\end{gather*}
    where $\gamma_i$ (resp. $\delta_i$, $\beta_i$) are distinct and $\{\delta_1,\dots,\delta_v\}=\{\beta_1,\dots\beta_v\}$.
    
    On the other hand if $\underline{j}^{'}=\underline{j}+(1,\dots,1)$, then $s_\delta T_{l,{\underline{j}^{'}}^{|\gamma\rangle}}-s_\gamma T_{l,{\underline{j}^{'}}^{|\delta\rangle}}$ is an $s$-minor of $E_{\boldsymbol{a}}$ and 
    $$
    \alpha\left(s_\delta T_{l,{\underline{j}^{'}}^{|\gamma\rangle}}-s_\gamma T_{l,{\underline{j}^{'}}^{|\delta\rangle}}\right)=s_\delta x_{\gamma,l}x_l^{\underline{j}}-s_\gamma x_{\delta,l}x_l^{\underline{j}}
    $$
    and if $\underline{i}^{(k)}=\underline{j}^{(k)}+(1,\dots,1)$, then 
    $$
    T_{\gamma_1,{\underline{i}^{(1)}}^{|\delta_1\rangle}}\dots T_{\gamma_v,{\underline{i}^{(v)}}^{|\delta_v\rangle}}-T_{\gamma_1,{\underline{i}^{(1)}}^{|\beta_1\rangle}}\dots T_{\gamma_v,{\underline{i}^{(v)}}^{|\beta_v\rangle}}
    $$
    is a binary quasi-minor of $E_{\boldsymbol{a}}$ and its image under $\alpha$ is
    $$
    \left(\prod_{i=1}^{v}x_{\delta_i,\gamma_i}x^{\underline{j}^{(i)}}_{\gamma_i}-\prod_{i=1}^{v}x_{\beta_i,\gamma_i}x^{\underline{j}^{(i)}}_{\gamma_i}\right).
    $$ 
    This completes the proof.
	\end{proof}

\begin{remark}
	We prove that every $\underline{s}$-binary quasi-minor of $E_{\boldsymbol{a}}$ can be generated by $2\times 2$ $\underline{s}$-minors and $T$-binary quasi-minors of $E_{\boldsymbol{a}}$. Let $f=s_iV_1V_2\dots V_n-s_jW_1W_2\dots W_n$ ($V_i$ and $W_i$ are equal to $T_{l,\underline{j}^{|k\rangle}}$'s). Without loss of generality we may assume $s_i$ and $W_1$ are in the same row and $W_1$ and $V_1$ are in the same column. If $V_1$ and $s_j$ are in the same row, then we have 
	\begin{gather*}
	f=s_iV_1V_2\dots V_n-s_jW_1W_2\dots W_n-s_jW_1V_2\dots V_n+s_jW_1V_2\dots V_n\\
	=(s_iV_1-s_jW_1)V_2\dots V_n+s_jW_1(V_2\dots V_n-W_2\dots W_n).
	\end{gather*}
	
	If $V_1$ and $s_j$ are not in the same row, then there is an $s_k$ which is in the same row with $V_1$. We have
	\begin{gather*}
	f=s_iV_1V_2\dots V_n-s_jW_1W_2\dots W_n-s_kW_1V_2\dots V_n+s_kW_1V_2\dots V_n\\
	=(s_iV_1-s_kW_1)V_2\dots V_n+W_1(s_kV_2\dots V_n-s_jW_2\dots W_n).
	\end{gather*}
	We can continue this procedure until all generators are either $2\times 2$ $\underline{s}$-minors or $T$-binary quasi-minors of $E_{\boldsymbol{a}}$ .
\end{remark}

\begin{remark}
We can define $D_{a_l}$ in another way. If $I_l=\langle s_{i_1},\dots,s_{i_u}\rangle$, then we define similar $\mathscr{T}_{a_l}$ and $\mathscr{T}^{'}_{a_l}$ with members $\underline{j}=(j_{u-1},\dots,j_1)$. We also define $s^{\underline{j}}$ which is a monomial in $s_{i_1},\dots,s_{i_u}$. Hence we define $T_{l,\underline{j}}$ and so $D_{a_l}$ and finally $E_{\boldsymbol{a}}$. We can see that the image of corresponding entries in both $D_{a_l}$ is the same. We will have a new $R[T_{l,\underline{j}}]$ and a new $\phi$, but this new one is isomorphic to the previous one and the image of corresponding variables under different $\phi$'s is the same. Then we have similar result about the generators of the kernel.
\end{remark}

\begin{theorem}\label{main-result}
Let $R=k[x_1,\dots,x_m]$ ($k$ a field) and let $\underline{s}=s_1,\dots,s_n$ be a regular sequence of squarefree monomials. If the ideals $I_i$ ($1\le i\le r$) are generated by arbitrary subsets of $\underline{s}$ and $a_i$ ($1\le i\le r$) are positive integers, then the multi-Rees algebra $R[I_1^{a_1}t_1,\dots,I_r^{a_r}t_r]$ is a normal Cohen-Macaulay domain. 
\end{theorem}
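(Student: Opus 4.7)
The plan is to realize the multi-Rees algebra $A \coloneqq R[I_1^{a_1}t_1, \dots, I_r^{a_r}t_r]$ as an affine semigroup ring over $k$, to establish normality by exhibiting a squarefree initial ideal of its defining toric ideal, and then to invoke Hochster's theorem for Cohen-Macaulayness. Since each $s_i$ is a squarefree monomial in $k[x_1,\dots,x_m]$, every generator $s^{\underline{j}}t_l$ of $A$ over $k$ is itself a monomial in $x_1,\dots,x_m,t_1,\dots,t_r$. Hence $A$ is a monomial subalgebra of $k[x_1,\dots,x_m,t_1,\dots,t_r]$, and in particular a domain and a finitely generated affine semigroup ring. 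By Theorem~\ref{main-result-1}, $A \cong S/I_{bin}(E_{\boldsymbol{a}})$ with $S = k[x_1,\dots,x_m][\{T_{l,\underline{j}}\}]$; since each binary quasi-minor is a binomial, $I_{bin}(E_{\boldsymbol{a}})$ is a toric ideal in the sense of Sturmfels.

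The core step is to show that $I_{bin}(E_{\boldsymbol{a}})$ admits a squarefree initial ideal. By Proposition~\ref{Grobner-bases-minor-1}, the binary quasi-minors of $E_{\boldsymbol{a}}$ form a universal Gr\"obner basis, so any monomial order on $S$ will do. Fix one, and let $f = V_1\cdots V_n - W_1\cdots W_n$ be a binary quasi-minor arising from an $n\times n$ binary subquasi-matrix of $E_{\boldsymbol{a}}$. Each monomial term of $f$ corresponds to a permutation, so its factors occupy pairwise distinct rows and pairwise distinct columns. In particular, at most one factor can lie in the $\underline{s}$-column, so every term has the shape $T_{i_1}\cdots T_{i_n}$ or $s_j\,T_{i_1}\cdots T_{i_{n-1}}$ with the $T$-variables distinct. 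Since each $s_j$ is individually squarefree in the $x_l$ and the $T$'s are variables of $S$ disjoint from the $x_l$, each leading term is a squarefree monomial in $S$, and therefore the initial ideal of $I_{bin}(E_{\boldsymbol{a}})$ is squarefree.

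The remaining arguments are then standard. By Sturmfels' theorem (Corollary 8.9 of \emph{Gr\"obner Bases and Convex Polytopes}), a toric ideal whose initial ideal is squarefree defines a normal semigroup ring, so $A$ is normal. By Hochster's theorem, any normal affine semigroup ring is Cohen-Macaulay, so $A$ is Cohen-Macaulay. The principal obstacle is the squarefree-leading-term analysis, which rests on two facts: the permutation structure of binary quasi-determinants, which forces at most one factor from the $\underline{s}$-column per term, and the squarefreeness of each $s_j$ individually. Relaxing either ingredient (allowing non-squarefree $s_j$, or permitting a term to contain two $s$-factors) would produce non-squarefree leading monomials and break the argument.
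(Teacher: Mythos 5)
Your overall strategy (squarefree initial ideal $\Rightarrow$ normality via Sturmfels, then Hochster for Cohen--Macaulayness) is the paper's strategy, but there is a genuine gap in the central step: your squarefree-leading-term analysis is only valid when all $a_i=1$. You argue that in a term $a_{1\sigma(1)}\cdots a_{n\sigma(n)}$ of a binary quasi-minor the factors occupy distinct rows and columns and are therefore \emph{distinct variables}; this is false for $E_{\boldsymbol{a}}$ when some $a_l\ge 2$, because the quasi-matrix $D_{a_l}$ is then \emph{not} generic --- the same variable $T_{l,\underline{m}}$ occurs in every row $k$ with $s_k\mid s^{\underline{m}}$. Concretely, for $n=2$, $I_1=\langle s_1,s_2\rangle$, $a_1=2$, the matrix $B_{2}$ is $\bigl(\begin{smallmatrix}T_{1,(0)}&T_{1,(1)}\\T_{1,(1)}&T_{1,(2)}\end{smallmatrix}\bigr)$, and the binary quasi-minor $T_{1,(0)}T_{1,(2)}-T_{1,(1)}^2$ has a non-squarefree term. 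Consequently (i) not every monomial order yields a squarefree initial ideal (a universal Gr\"obner basis claim cannot give squarefreeness here), and (ii) your appeal to Proposition~\ref{Grobner-bases-minor-1} is not licensed, since that proposition is proved only for \emph{generic} quasi-matrices, i.e.\ those whose entries are distinct variables.

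The paper circumvents exactly this issue by splitting the proof: it first establishes normality for $R[I_1u_1,\dots,I_ru_r]$ (all exponents $1$), where $E$ \emph{is} generic and your squarefreeness argument is correct, and then deduces the general case from the isomorphism $R[I_1^{a_1}t_1,\dots,I_r^{a_r}t_r]\cong R[I_1^{a_1}u_1^{a_1},\dots,I_r^{a_r}u_r^{a_r}]$, the latter being a (Veronese-type) direct summand of $R[I_1u_1,\dots,I_ru_r]$; a direct summand of a normal domain is normal. Cohen--Macaulayness is then obtained, as you do, from Hochster's theorem applied to the normal affine semigroup generated by $x_1,\dots,x_m$ and the $ft_i$. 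To repair your write-up, either insert this reduction to the case $\boldsymbol{a}=(1,\dots,1)$, or give a separate argument that $I_{bin}(E_{\boldsymbol{a}})$ admits \emph{some} term order with squarefree initial ideal --- but the latter does not follow from the permutation structure of binary quasi-determinants alone.
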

\begin{proof}
	First we show that $R[I_1u_1,\dots,I_ru_r]$ is normal. By Theorem~\ref{main-result-1} we have
	$$
	R[I_1u_1,\dots,I_ru_r]\cong R[T_{l,\underline{j}}]/I_{bin}(E_{\boldsymbol{a}}),
	$$
	and by similar argument to Proposition~\ref{Grobner-bases-minor-1} $\lt(I_{bin}(E_{\boldsymbol{a}}))$ is generated by leading terms of binary quasi-minors which are squarefree, hence by \cite[Proposition 13.5, Proposition 13.15]{sturmfels1996grobner} $R[I_1u_1,\dots,I_ru_r]$ is a normal domain. On the other hand $R[I_1^{a_1}t_1,\dots,I_r^{a_r}t_r]\cong R[I_1^{a_1}u_1^{a_1},\dots,I_r^{a_r}u_r^{a_r}]$ and $R[I_1^{a_1}u_1^{a_1},\dots,I_r^{a_r}u_r^{a_r}]$ is a direct summand of $R[I_1u_1,\dots,I_ru_r]$, so $R[I_1^{a_1}t_1,\dots,I_r^{a_r}t_r]$ is normal.
	
	For Cohen-Macaulayness, if we consider the semigroup $M$ generated by $x_1,\dots,x_m$ and $ft_i$, where $f$ is a generator of $I_i^{a_i}$, then by \cite[Proposition 1]{hochster1972rings}, $M$ is normal, hence by \cite[Theorem 1]{hochster1972rings}, $k[M]\cong R[I_1^{a_1}t_1,\dots,I_r^{a_r}t_r]$ is a Cohen-Macaulay domain.
\end{proof}

\begin{theorem}
	Let $T=R[x_1,\dots,x_m]$ ($R$ a Noetherian ring) and let $\underline{s}=s_1,\dots,s_n$ be a regular sequence of squarefree monomials. If the ideals $I_i$ ($1\le i\le r$) are generated by arbitrary subsets of $\underline{s}$ and $a_i$ ($1\le i\le r$) are positive integers, then we have the following
	
	i. If $R$ is a Cohen-Macaulay ring, then the multi-Rees algebra $T[I_1^{a_1}t_1,\dots,I_r^{a_r}t_r]$ is a Cohen-Macaulay ring.
	
	ii. If $R$ is a normal domain, then the multi-Rees algebra $T[I_1^{a_1}t_1,\dots,I_r^{a_r}t_r]$ is a normal domain. 
\end{theorem}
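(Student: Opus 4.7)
The plan is to realize the multi-Rees algebra as a semigroup ring $R[M]$ and then transfer Cohen-Macaulayness and normality along the faithfully flat extension $R\to R[M]$ fiber by fiber, reducing each fiber to the field-coefficient case already handled in Theorem~\ref{main-result}. To set this up, I would first note that the $R$-algebra generators of $T[I_1^{a_1}t_1,\dots,I_r^{a_r}t_r]$ are the variables $x_1,\dots,x_m$ together with all products $ft_i$, where $f$ runs over the monomial generators of $I_i^{a_i}$. This list depends only on the combinatorics of the subsets of $\underline{s}$ and on the exponents $a_i$, not on the coefficient ring, so the multi-Rees algebra is canonically isomorphic to the semigroup ring $R[M]$, where $M$ is the affine submonoid of the free abelian monoid on $x_1,\dots,x_m,t_1,\dots,t_r$ generated by these monomials. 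This is exactly the semigroup appearing in the proof of Theorem~\ref{main-result}; in particular, Proposition~1 of \cite{hochster1972rings} already tells us that $M$ is normal. Consequently $R[M]$ is free as an $R$-module on the basis $M$, the structure map $R\to R[M]$ is faithfully flat, and for every prime $\mathfrak{p}\subseteq R$ the fiber
$$
R[M]\otimes_R k(\mathfrak{p})\;\cong\;k(\mathfrak{p})[M]
$$
is a Cohen-Macaulay normal domain by Theorem~\ref{main-result} applied over the field $k(\mathfrak{p})$.

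For part~(i) I would then take an arbitrary prime $\mathfrak{q}$ of $R[M]$ and set $\mathfrak{p}=\mathfrak{q}\cap R$. The induced map $R_\mathfrak{p}\to R[M]_\mathfrak{q}$ is flat and local, and its closed fiber is a localization of $k(\mathfrak{p})[M]$, hence Cohen-Macaulay. Since $R_\mathfrak{p}$ is Cohen-Macaulay, the standard transfer theorem for Cohen-Macaulayness along a flat local homomorphism (Cohen-Macaulay base plus Cohen-Macaulay closed fiber implies Cohen-Macaulay target) yields that $R[M]_\mathfrak{q}$ is Cohen-Macaulay; as $\mathfrak{q}$ was arbitrary, $R[M]$ itself is Cohen-Macaulay. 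For part~(ii), $R[M]$ is a domain because $R$ is and $M$ embeds into a free abelian group; normality then follows from the analogous Serre-$(R_1)+(S_2)$ transfer theorem applied to the same flat local maps $R_\mathfrak{p}\to R[M]_\mathfrak{q}$, whose closed fibers are localizations of the normal domain $k(\mathfrak{p})[M]$.

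The main obstacle is essentially organizational: identifying the semigroup $M$ correctly and matching each geometric fiber of $\operatorname{Spec} R[M]\to\operatorname{Spec} R$ to an instance of Theorem~\ref{main-result}. All of the genuinely hard content — the normality of the affine semigroup $M$ and the Cohen-Macaulay/normality statements over a field — is already absorbed into Theorem~\ref{main-result}, so the present result is essentially a formal extension via faithfully flat base change.
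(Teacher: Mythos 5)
Your proof is correct, and it shares the paper's central observation --- that the multi-Rees algebra is the semigroup ring $R[M]$ of a normal affine semigroup $M$, the same $M$ as in the proof of Theorem~\ref{main-result} --- but you handle the final step differently. The paper simply invokes \cite[Theorem 1]{hochster1972rings} for part (i) and \cite[Proposition 1]{hochster1972rings} for part (ii) in the form that applies directly over a Cohen--Macaulay (resp.\ normal) coefficient ring, so its proof is a two-line citation. You instead use only the field-coefficient case (Theorem~\ref{main-result}) and recover the general case by faithfully flat descent: $R[M]$ is free over $R$ with basis $M$, each fiber of $R\to R[M]$ is $k(\mathfrak{p})[M]$, and the standard fiber criteria for Cohen--Macaulayness and for Serre's conditions $(R_1)+(S_2)$ along a flat local homomorphism of Noetherian local rings finish the argument. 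This costs a little more writing but buys independence from the general-base-ring form of Hochster's results, and every step checks out: $R[M]$ is a finitely generated $R$-algebra (hence Noetherian), the closed fibers are localizations of $k(\mathfrak{p})[M]$, the hypothesis that $\underline{s}$ is a regular sequence of squarefree monomials is insensitive to the coefficient field, and $R[M]$ is a domain in part (ii) because it embeds in a polynomial ring over the domain $R$.
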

\begin{proof}
	If $M$ is the semigroup mentioned in the proof of Theorem~\ref{main-result}, then $M$ is normal and hence:
	
	i. By \cite[Theorem 1]{hochster1972rings}, $R[M]\cong T[I_1^{a_1}t_1,\dots,I_r^{a_r}t_r]$ is Cohen-Macaulay.
	
	ii. This follows from \cite[Proposition 1]{hochster1972rings}.
\end{proof}

\begin{example}
	Let $R[x,y]$, where $R$ is a UFD and is not a PID. Suppose $p_1,p_2$ are two non-associate irreducible elements in $R$. Let $I_1=\langle p_1,p_2\rangle$, $I_2=\langle p_1,x\rangle$, $I_3=\langle p_2,x\rangle$, $I_4=\langle p_1,y\rangle$, $I_5=\langle p_2,y\rangle$. Then we have the homomorphism 
	\begin{gather*}
	\phi: R[T_{1,1,1,1},T_{1,1,1,0},T_{2,1,1,1},T_{2,1,0,0},T_{3,1,1,0},T_{3,1,0,0},T_{4,1,1,1},T_{4,0,0,0},T_{5,1,1,0},T_{5,0,0,0}]\\
	\rightarrow R[I_1t_1,I_2t_2,I_3t_3,I_4t_4,I_5t_5],\\
	T_{1,1,1,1}\mapsto p_1t_1,\ T_{1,1,1,0}\mapsto p_2t_1,\ T_{2,1,1,1}\mapsto p_1t_2,\ T_{2,1,0,0}\mapsto xt_2,\ T_{3,1,1,0}\mapsto p_2t_3,\\ 
	T_{3,1,0,0}\mapsto xt_3,\ T_{4,1,1,1}\mapsto p_1t_4,\ T_{4,0,0,0}\mapsto yt_4,\ T_{5,1,1,0}\mapsto p_2t_5,\ T_{5,0,0,0}\mapsto yt_5,   
	\end{gather*}
	and its kernel is 
	\begin{gather*}
	\langle p_1T_{1,1,1,0}-p_2T_{1,1,1,1},p_1T_{2,1,0,0}-xT_{2,1,1,1},p_2T_{3,1,0,0}-xT_{3,1,1,0},p_1T_{4,0,0,0}-yT_{4,1,1,1},\\
	p_2T_{5,0,0,0}-yT_{5,1,1,0}, T_{1,1,1,1}T_{3,1,1,0}T_{2,1,0,0}-T_{1,1,1,0}T_{2,1,1,1}T_{3,1,0,0},\\T_{1,1,1,1}T_{5,1,1,0}T_{4,0,0,0}-T_{1,1,1,0}T_{4,1,1,1}T_{5,0,0,0},
	T_{2,1,1,1}T_{3,1,0,0}T_{4,0,0,0}T_{5,1,1,0}-T_{2,1,0,0}T_{3,1,1,0}T_{4,1,1,1}T_{5,0,0,0}\rangle.
	\end{gather*}
	We see that $E_{\boldsymbol{a}}$ has the form below
	$$
	\begin{bmatrix*}
	p_1&T_{1,1,1,1}&T_{2,1,1,1}&           &T_{4,1,1,1}&           \\
	p_2&T_{1,1,1,0}&           &T_{3,1,1,0}&           &T_{5,1,1,0}\\
	x&           &T_{2,1,0,0}&T_{3,1,0,0}&           &           \\
	y&           &           &           &T_{4,0,0,0}&T_{5,0,0,0}
	\end{bmatrix*}.
	$$
\end{example}

\section{Acknowledgment}

I would like to express my deepest gratitude to my advisor, Prof. Mark Johnson, for his support and guidance throughout this work.

\begin{bibdiv}
	\begin{biblist}
		\bib{adams1994introduction}{book}{
		title={An introduction to Gr\"{o}bner bases},
		author={Adams, William W.},
		author={Loustaunau, Philippe},
		volume={3},
		year={1994},
		publisher={American Mathematical Society}
	}	
		\bib{buchberger1984critical}{article}{
			title={A critical-pair/completion algorithm for finitely generated ideals in rings},
			author={Buchberger, Bruno},
			journal={Logic and machines: decision problems and complexity},
			volume={171},
			pages={137--161},
			year={1984}
		}	
		\bib{buchberger1965algorithmus}{thesis}{
			title={Ein algorithmus zum auffinden der basiselemente des restklassenringes nach einem nulldimensionalen polynomideal},
			author={Buchberger, Bruno},
			year={1965},
			school={Doctoral Dissertation Math. Inst. University of Innsbruck, Austria}
		}
		\bib{cox2007ideals}{book}{
		title={Ideals, varieties, and algorithms: an introduction to computational algebraic geometry and commutative algebra},
		author={Cox, David A},
		author={Little, John},
		author={O'Shea, Donal},
		year={2007},
		publisher={Springer-Verlag, New York}
	}
		\bib{eisenbud2003rees}{article}{
			title={What is the Rees algebra of a module?},
			author={Eisenbud, David},
			author={Huneke, Craig},
			author={Ulrich, Bernd},
			journal={Proceedings of the American Mathematical Society},
			volume={131},
			number={3},
			pages={701--708},
			year={2003}
		}
	 	  \bib{herzog2011monomial}{book}{
		title={Monomial ideals},
		author={Herzog, J{\"u}rgen},
		author={Hibi, Takayuki},
		volume={260},
		year={2011},
		publisher={Springer-Verlag, London}
	  }
		\bib{hochster1972rings}{article}{
			title={Rings of invariants of tori, Cohen-Macaulay rings generated by monomials, and polytopes},
			author={Hochster, Melvin},
			journal={Annals of Mathematics},
			volume={96},
			pages={318--337},
			year={1972}
		}
		\bib{jabarnejad2016rees}{article}{
			title={Equations defining the multi-Rees algebras of powers of an ideal},
			author={Jabarnejad, Babak},
			journal={Journal of Pure and Applied Algebra},
			volume={222},
			pages={1906--1910},
			year={2018}
		}
		\bib{johnson2006equations}{article}{
			title={On equations defining Veronese rings},
			author={Johnson, Mark R.},
			author={McLoud-Mann, Jennifer},
			journal={Archiv der Mathematik},
			volume={86},
			number={3},
			pages={205--210},
			year={2006}
		}
			\bib{kaplansky1962r}{article}{
		title={R-sequences and homological dimension},
		author={Kaplansky, Irving},
		journal={Nagoya Mathematical Journal},
		volume={20},
		pages={195--199},
		year={1962}
	   }
		\bib{kustinpoliniulrich2017blowup}{article}{
			title={The equations defining blowup algebras of height three Gorenstein ideals},
			author={Kustin, Andrew R.},
			author={Polini, Claudia},
			author={Ulrich, Bernd},
			journal={Algebra and Number Theory},
			volume={11},
			number={7},
			pages={1489--1525},
			year={2017}
		}
		\bib{morey1996rees}{article}{
			title={Equations of blowups of ideals of codimension two and three},
			author={Morey, Susan},
			journal={Journal of Pure and Applied Algebra},
			volume={109},
			pages={197--211},
			year={1996}
		}
		\bib{moreyulrich1996rees}{article}{
			title={Rees algebras of ideals with low codimension},
			author={Morey, Susan},
			author={Ulrich, Bernd},
			journal={Proc. Amer. Math. Soc.},
			volume={124},
			pages={3653--3661},
			year={1996}
		}
		\bib{ribbe1999defining}{article}{
			title={On the defining equations of multi-graded rings},
			author={Ribbe, J.},
			journal={Communications in Algebra},
			volume={27},
			number={3},
			pages={1393--1402},
			year={1999}
		}
		\bib{simis2003rees}{article}{
			title={Rees algebras of modules},
			author={Simis, Aron},
			author={Ulrich, Bernd},
			author={Vasconcelos, Wolmer V.},
			journal={Proceedings of the London Mathematical Society},
			volume={87},
			number={3},
			pages={610--646},
			year={2003}
		}
		\bib{sturmfels1996grobner}{book}{
			title={Gr{\"o}bner bases and convex polytopes},
			author={Sturmfels, Bernd},
			series={University Lecture Series},
			volume={8},
			publisher={American Mathematical Society},
			year={1996}
		}
	      \bib{taylor1966ideals}{thesis}{
		title={Ideals generated by monomials in an R-sequence},
		author={Taylor, Diana Kahn},
		year={1966},
		school={PhD Thesis, University of Chicago, Department of Mathematics}
	   } 
		\bib{huneke2006integral}{book}{
			title={Integral closure of ideals, rings, and modules},
			author={Swanson, Irena},
			author={Huneke, Craig},
			year={2006},
			publisher={Cambridge University Press}
		}
		\bib{vasconcelosulrich1993rees}{article}{
			title={The equations of Rees algebras of ideals with linear presentation},
			author={Ulrich, Bernd},
			author={ and Vasconcelos, Wolmer V.},
			journal={Math. Z.},
			volume={214},
			pages={79--92},
			year={1993}
		}
		\bib{vasconcelos1991rees}{article}{
			title={On the equations of Rees algebras},
			author={Vasconcelos, Wolmer V.},
			journal={J. Reine Angew. Math.},
			volume={418},
			pages={189--218},
			year={1991}
		}
		\end{biblist}
\end{bibdiv}

\end{document}